\documentclass[11pt,reqno]{amsart}
\usepackage[T1]{fontenc}
\usepackage{graphicx}
\usepackage{cite}
\usepackage{fixltx2e}
\usepackage[utf8]{inputenc}

\usepackage{color}
\definecolor{MyLinkColor}{rgb}{0,0,0.4}
\numberwithin{equation}{section}


\newcommand{\re}{\mathop{\rm Re}\nolimits}

\newcommand{\0}{\Omega}
\newcommand{\e}{\varepsilon}

\newcommand{\p}{\partial}
\newcommand{\wt}{\widetilde}

\newcommand{\G}{\Gamma}

\newcommand{\bA}{\mathbb{A}}

\newcommand{\cO}{\mathcal{O}}

\newcommand{\kH}{\mathcal{H}}

\newcommand{\kL}{\mathcal{L}}

\newcommand{\E}{\mathbb{E}}

\newcommand{\R}{\mathbb{R}}
\newcommand{\s}{\mathbb S}
\newcommand{\N}{\mathbb{N}}

\newtheorem{thm}{Theorem}[section]

\newtheorem{lemma}[thm]{Lemma}

\theoremstyle{remark} 
\newtheorem{rem}[thm]{Remark}

\textwidth16.5cm
\oddsidemargin0.025cm
\evensidemargin0.025cm
\textheight21cm

\numberwithin{equation}{section}

\title[Mean curvature flow of rotationally symmetric closed surfaces]{On a degenerate parabolic system describing the mean curvature flow of rotationally symmetric closed surfaces}

  \author{Harald Garcke}
\address{Fakult\"at f\"ur Mathematik, Universit\"at Regensburg,   93040 Regensburg, Deutschland.}
\email{harald.garcke@ur.de}
  
\author{Bogdan--Vasile Matioc}
\address{Fakult\"at f\"ur Mathematik, Universit\"at Regensburg,   93040 Regensburg, Deutschland.}
\email{bogdan.matioc@ur.de}

\subjclass[2010]{35K55, 53C44, 35R35, 35K93}
\keywords{Mean curvature flow, degenerate parabolic equation,
maximal regularity, parabolic smoothing}

\usepackage[colorlinks=true,linkcolor=MyLinkColor,citecolor=MyLinkColor]{hyperref} 

\begin{document}

\begin{abstract}
 We show that the mean curvature flow for a closed and rotationally symmetric surface  can be formulated as an evolution problem consisting of an evolution equation for the square
  of the function whose graph is rotated and  
two ODEs describing the evolution of the points of the evolving surface that lie on the rotation axis. 
For the fully nonlinear and degenerate  parabolic problem we establish the well-posedness property in the setting of classical solutions.
Besides we prove that the problem features the effect of parabolic smoothing.
\end{abstract}

\maketitle

 
\section{Introduction}\label{Sec:1}
Mean curvature flow is the most efficient way to decrease the surface
area of a hypersurface. It hence has been of great interest in geometry
as well as in materials science and image analysis, see
\cite{Mu56,Br78, Hu84, CDR03, Ec04, Gi06, Ma11}. 
Since the pioneering work of Brakke \cite{Br78} and Huisken \cite{Hu84} many results have been
shown for mean curvature flow and we refer to \cite{Ma11}  
 and the references therein for more information about the subject. 
The case of  rotationally symmetric evolutions lead to 
spatially one-dimensional problems  and  due to the reduced complexity
this situation has been studied by several authors analytically \cite{DK91, Hu90,
AltschulerAG95, MB07,
LeCrone14} as well as numerically \cite{MayerS02, BGN19}. 

In particular,  rotationally symmetric mean curvature flow has been
helpful
to understand singularity formation in  curvature flows, see
\cite{Hu90, DK91,
AltschulerAG95, MB07, EM10}. 
Most of the analytical results have been
restricted to the case of surfaces with boundary or periodic unbounded
situations. The situation becomes analytically far more involved
if one considers closed surfaces, i.e., compact  surfaces without
boundaries. 
In this  context the governing equation can be recast, provided that the 
points on the rotation axis have positive curvature, as a free boundary problem which involves both degenerate and
singular terms.
 This paper gives  first well-posedness and parabolic smoothing results for the 
 free boundary problem describing compact rotationally symmetric  surfaces evolving by mean curvature  derived herein. 
 
Let us now precisely formulate the analytic problem.
We study the  evolution of a family of rotationally symmetric   surfaces  $\{\G(t)\}_{t\geq0}$  by the mean curvature flow. 
Given $t\geq0$, we assume that $$\G(t):=\{(x,u(t,x)\cos\vartheta, u(t,x)\sin\vartheta)\,:\, x\in[a(t),b(t)],\, \vartheta\in[0,2\pi]\}\subset\R^3$$
is the surface obtained by rotating the graph of the unknown function $u(t):[a(t),b(t)]\to\R$  around the $x$-axis.
Moreover, we consider herein the case when the surfaces $\G(t)$ are closed, meaning in particular that also the domain of definition $[a(t),b(t)]$ of $u(t)$ is unknown.
Since the motion of the surfaces is governed by the equation
\begin{align}\label{MCF}
V(t)=H(t)\quad\text{on $\Gamma(t),\, t\geq0,$}
\end{align}
where  $V(t)$ is the normal velocity of $\G(t)$ and $H(t)=k_1(t)+k_2(t)$ the mean curvature of $\Gamma(t)$, with $k_i(t),$ $i=1,\,2,$ denoting the principle curvatures of $\G(t)$,
we obtain the following evolution equation for the unknown function $u$:
\begin{subequations}\label{Pu}
\begin{equation}\label{Pu1}
u_t= \cfrac{u_{xx}}{1+u_x^2}-\cfrac{1}{u},\quad t\geq0, \, x\in(a(t),b(t)).
\end{equation}
We assumed that
\begin{equation}\label{Pu2}
u(t,x) >0,\quad   t\geq0,\, x\in(a(t),b(t)).
\end{equation} 
This equation cannot be realized at $x\in\{a(t),\, b(t)\}$ as we impose the following boundary conditions
\begin{align}\label{Pu3}
\left\{
\begin{array}{lllll}
 u(t,a(t)) =u(t,b(t))=0,\quad   t\geq0,\\[2ex]
  \underset{x\searrow a(t)}\lim u_x(t,x)=\infty,\,   \underset{x\nearrow b(t)}\lim u_x(t,x) = -\infty,\quad    t\geq0, 
\end{array}
\right.
\end{align}
which express the fact that $\Gamma(t)$  is a closed surface without boundary.\medskip

\noindent{\bf The evolution of the boundaries: The first approach.} As the functions $a$ and $b$ are unknown, we 
have to derive equations describing the evolution of these two boundaries.  
If we want to  evaluate the  normal velocity
at $(a(t),0,0)\in\Gamma(t)$, it follows from \eqref{Pu3} that $V(t)|_{(a(t),0,0)}=a'(t).$ 
The next goal is to express $H(t)$ at $(a(t),0,0)$ in terms of $u(t).$
To this end we assume, for some $\e>0$, that $$u(t):[a(t),a(t)+\e]\to[0,u(t, a(t)+\e)]$$ is invertible  with the inverse function $w(t)\in {\rm C}^2([0,u(t, a(t)+\e)]) $, 
so that in particular $\Gamma(t)$ is a ${\rm C}^2$-surface close to $(a(t),0,0)$.
Then due to the fact that $ u_x (t,x) \to\infty$ for $x\to a(t)$ we have
 $ w_y(t,0)=0$ and 
\begin{align*}
\lim_{x\to a(t)}k_1(t,x,\vartheta)=\lim_{x\to a(t)}\Big(-\frac{u_{xx}}{(1+u_x^2)^{3/2}}\Big)(t,x)=\lim_{y\to 0}\frac{w_{yy}}{(1+w_y^2)^{3/2}}(t,y)=w_{yy}(t,0),\\[1ex]
\lim_{x\to a(t)}k_2(t,x,\vartheta)=\lim_{x\to a(t)} \frac{1}{u(1+u_x^2)^{1/2}}(t,x)=\lim_{y\to 0}\frac{w_{y}}{y(1+w_y^2)^{1/2}}(t,y)=w_{yy}(t,0), 
\end{align*}
hence $H(t)|_{(a(t),0,0)}=2w_{yy}(t,0).$
Noticing that  
\[
\lim_{x\to a(t)}(uu_x)(t,x)=\lim_{y\to 0}\frac{y}{w_y(t,y)}=\frac{1}{w_{yy}(t,0)},
\]
in the case when  $H(t)|_{(a(t),0,0)}=2w_{yy}(t,0)>0$  we obtain the following relation
\begin{align}\label{Pu4a}
a'(t)=H(t)|_{(a(t),0,0)}=\frac{2}{\underset{x\to a(t)}\lim(u^2/2)_x(t,x)}.
\end{align}
Similarly, assuming that, for some $\e>0$,  $$u(t):[b(t)-\e,b(t)]\to[0,u(t, b(t)-\e)]$$ is invertible and $H(t)|_{(b(t),0,0)}>0$, we find for $b$ the evolution equation
\begin{align}\label{Pu4b}
b'(t)=H(t)|_{(b(t),0,0)}=\frac{2}{\underset{x\to b(t)}\lim(u^2/2)_x(t,x)}.
\end{align}
\end{subequations}
\noindent{\bf The evolution of the boundaries: The second approach.}
A major drawback of the (formally) quasilinear parabolic  equation \eqref{Pu1}  is  that the boundary conditions \eqref{Pu3}  make the equation highly degenerate as:
\begin{itemize}
\item[(I)] The diffusion coefficient vanishes in the limit $x\to a(t)$ and $x\to b(t)$;
\item[(II)] The term $1/u$ becomes unbounded for $x\to a(t)$  and $x\to b(t)$.
\end{itemize}
In order to overcome (II) we introduce, motivated also by \eqref{Pu4a}-\eqref{Pu4b}, a new unknown $v$ via
 \[
v(t,x):=\frac{u^2(t,x)}{2}\qquad\text{for $t\geq 0$ and $x\in[a(t),b(t)]$.} 
 \] 
 Then $v(t)$ also vanishes at the boundary points $a(t)$ and $b(t)$ and \eqref{Pu1} can be expressed as
 \begin{align}\label{SoMi}
v_t= \cfrac{2vv_{xx}}{2v+v_x^2}-\cfrac{v_x^2}{2v+v_x^2}-1,\qquad t\geq0, \, x\in(a(t),b(t)).
 \end{align}
This equation  is also (formally) quasilinear  parabolic and also degenerate -- as the  diffusion coefficient 
  vanishes for $x\to a(t)$ and $x\to b(t)$, cf. \eqref{SoRe} below -- but now none of the  coefficients is singular.
In order to obtain an evolution equation also for the functions describing the boundaries, we assume that  
\begin{align}\label{SoRe}
 v_x(t, a(t))>0 \quad\text{and}\quad v_x(t,b(t))<0,\qquad t\geq0,
\end{align} 
and 
\begin{align}\label{SoLa}
\lim_{x\to a(t)}(vv_{xx})(t,x)=\lim_{x\to b(t)}(vv_{xx})(t,x)=0,\qquad t\geq0.
\end{align}
 Note that \eqref{SoRe}  implies in particular that the corresponding function $u$ satisfies $\eqref{Pu3}.$
 Furthermore, \eqref{SoLa} is a nonlinear boundary condition for $v$ which is equivalent to our former assumption  that $\Gamma(t)$ is a ${\rm C}^2$-surface, cf. Lemma \ref{L:A-1}.
Differentiating now the relation  $v(t,a(t))=0$, $t\geq0$,  with respect
to time, it follows in virtue of \eqref{SoMi}  and \eqref{SoLa}, that
 \[
a'(t)= \frac{2}{v_x(t,a(t))} \quad\text{and}\quad  b'(t)= \frac{2}{v_x(t,b(t))} ,\qquad t\geq0.
 \]
These are the very same relations as in \eqref{Pu4a}-\eqref{Pu4b}.
It is not difficult to see, cf. Lemma \ref{L:A-1}, that the two approaches are equivalent.

Summarizing, we may formulate the problem by using $v$ as an unknown and we   arrive at the    evolution problem
 \begin{subequations}\label{Pv}
 \begin{align}\label{Pv1}
\left\{
\begin{array}{rlllll}
 &v_t= \cfrac{2vv_{xx}}{2v+v_x^2}-\cfrac{v_x^2}{2v+v_x^2}-1,\quad t\geq0, \, x\in(a(t),b(t)),\\[2ex]
 &a'(t) =\cfrac{2}{v_x(t,a(t))},\quad t\geq0,\\[2ex]
 &b'(t) =\cfrac{2}{v_x(t,b(t))},\quad t\geq0,\\[2ex]
 &v(t,a(t))=v(t,b(t))=0,\quad t\geq0,\\[2ex]
 &\underset{x\to a(t)}\lim(vv_{xx})(t,x)=\underset{x\to b(t)}\lim(vv_{xx})(t,x)=0,\quad t\geq0,\\[2ex]
 &v(t,x) >0,\quad   t\geq0,\, x\in(a(t),b(t)),\\[2ex]
 & v_x(t,a(t))>0,\,   v_x(t,b(t)) <0,\quad    t\geq0, 
\end{array}
\right.
\end{align}
with  initial conditions
  \begin{align}\label{Pv2}
v(0)=v_0,\quad a(0)=a_0,\quad b(0)=b_0. 
\end{align}
 \end{subequations}
In the following we use the formulation \eqref{Pv} in order to investigate the mean curvature flow \eqref{MCF}. We are interested here to prove the existence and uniqueness of solutions   
which satisfy the equations in  a classical sense (a weak formulation of \eqref{Pv} is not available yet).  
 The formulation \eqref{Pv} has two advantages compared to the
classical approach followed in  \cite{ESim98,Ma11,  DGK14} for example. Firstly, the equations are explicit (we do not need to work with local charts) and, secondly 
because the maximal solutions to \eqref{Pv} are defined in general on a larger time interval compared to the ones in
\cite{ESim98, Ma11,  DGK14} (the solutions in  \cite{ESim98, Ma11, DGK14} exist only  in a
small neighborhood  of a fixed reference manifold over which they are parameterized). 
A disadvantage of our approach is  that herein the 
initially surfaces are necessary of class ${\rm C}^2$ while in   \cite{ESim98}    only  ${\rm h}^{1+\alpha}$-regularity, for some fixed $\alpha\in(0,1)$,  is required.
  An interesting research topic which we next plan to follow is to determine initial data  $u_0:[a_0,b_0]\to\R$ for which the closed rotationally symmetric
   surface $\Gamma(0)$ evolves such that neck pinching at the origin occurs in finite time. This topic has been already studied in the context of \eqref{Pu1}, but in the special setting when $a(t)$ and $b(t)$ are kept fixed, the function $u$ is strictly  positive, and suitable boundary conditions (either of Neumann or Dirichlet type) are imposed at these two fixed boundary points, cf. \cite{MB07, EM10, DK91, Hu90, Mc15}. In the context of closed surfaces without boundary considered herein
   there are several results establishing the convergence  of initially convex surfaces towards a round point in finite time, cf. e.g. \cite{Mc15, AnM12, Hu84},
    but to the best of our knowledge no result establishing neck pinching at the origin is available. 
  It is worth to emphasize that in this context however, by using maximum principles and some explicit solutions to the mean curvature flow, such as spheres, hyperboloids, 
  or  shrinking donuts, there are several examples of dumbbell shaped  surfaces which   develop singularities in finite time, cf.  \cite{An92, GR89, Ec04}.

 \begin{rem} 
 \begin{itemize}
 \item[(i)] If $u(t,x)=\sqrt{a^2(t)-x^2}$, $|x|\leq a(t)$, then the surfaces under consideration are spheres and the radius $a(t)>0$ solves the ODE
 \[
a'(t)=-\frac{2}{a(t)}=\frac{2}{v_x(t,a(t))},\qquad t\geq0. 
 \]
 \item[(ii)] The conditions \eqref{SoRe}-\eqref{SoLa}  impose   some restrictions on the initial data. 
 Lemma \ref{L:A-1} shows that any rotationally symmetric surface of class ${\rm C}^2$  with  mean curvature that does not vanish  
 at the points on the rotation axis satisfies \eqref{SoRe}-\eqref{SoLa}.
 These properties are then  preserved by the flow.
 \end{itemize}

 \end{rem}

We will solve the degenerate parabolic system in the setting of small H\"older spaces.
The small H\"older space  ${\rm h}^{k+\alpha}(\s)$, $k\in\N$, $\alpha\in(0,1),$ is defined
as the closure of the smooth periodic functions ${\rm C}^\infty(\s)$
(or equivalently of ${\rm C}^{k+\alpha'}(\s)$, $\alpha'>\alpha$)
in the classical H\"older space  ${\rm C}^{k+\alpha}(\s)$
of $2\pi$-periodic functions on the line with $\alpha$-H\"older
continuous $k$-th derivatives.
Besides, ${\rm h}_e^{k+\alpha}(\s)$, denotes the subspace of ${\rm
h}^{k+\alpha}(\s)$ consisting only of even functions.
By definition, the embedding ${\rm h}_e^{r}(\s)\hookrightarrow {\rm
h}_e^{s}(\s)$, $r>s$, is dense  and
moreover it holds
  \begin{equation}\label{Intprop}
({\rm h}^{r}_e(\s),{\rm h}^{s}_e(\s))_\theta={\rm h}^{(1-\theta)r+\theta
s}_e(\s)\qquad\text{for $ \theta\in(0,1)$ and $(1-\theta)r+\theta
s\notin\N.$}
\end{equation}
Here $(\cdot,\cdot)_\theta=(\cdot,\cdot)_{\theta,\infty}^0$ denotes the
continuous interpolation functor introduced by Da Prato and Grisvard \cite{DG79}.
\pagebreak
 
 The main result of this paper is the following theorem.
 
 \begin{thm}\label{MT1} Let $\alpha\in(0,1)$ be a fixed H\"older exponent,  $a_0< b_0\in\R$, 
 and  let $v_0\in  {\rm C}^1([a_0,b_0]) $ be positive in $(a_0,b_0)$ such that $v_0(a_0)=v_0(b_0)=0,$ $v_0'(a)>0>v_0'(b),$ and 
  \[
 v_0((a_0+b_0)/2-(b_0-a_0)\cos(\cdot)/2) \in {\rm h}_e^{2+\alpha}(\s). 
 \]
 Then the evolution problem \eqref{Pv} has a unique maximal solution $(v,a,b):=(v,a,b)(\,\cdot\,;(v_0,a_0,b_0))$ such that 
 \begin{align*}
&h\in  {\rm C}^1([0,t^+),{\rm h}^{\alpha}_e(\s))\cap {\rm C}([0,t^+),{\rm h}^{2+\alpha}_e(\s)),\\[1ex]
&a,\, b\in {\rm C}^1([0,t^+),\R) ,\\[1ex]
&a(t)<b(t)\quad\text{for all $t\in[0,t^+)$},
 \end{align*}
 where   
 $$h(t,x):=v(t,(a(t)+b(t))/2-(b(t) -a(t) )\cos(x)/2),\quad t\in[0,t^+),\, x\in\R,$$ 
 and $t^+:=t^+(v_0,a_0,b_0)\in(0, \infty]$. 
 Moreover, it holds that 
 \[
a,\, b\in{\rm C}^\omega((0,t^+)),\qquad v\in {\rm C}^\omega(\{(t,x)\,:\, 0<t<t^+,\, a(t)<x<b(t)\},(0,\infty)). 
 \] 
 \end{thm}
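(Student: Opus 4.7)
\bigskip

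\noindent\textbf{Proof proposal.} The plan is to fix the moving domain by the change of variable $x = \phi_{a,b}(y) := (a+b)/2 - (b-a)\cos(y)/2$, which maps the fixed interval $[0,\pi]$ diffeomorphically onto $[a,b]$ and sends $0\mapsto a$, $\pi\mapsto b$. Setting
\[
h(t,y) := v\bigl(t,\phi_{a(t),b(t)}(y)\bigr),
\]
and extending $h(t,\cdot)$ to $\R$ by evenness and $2\pi$-periodicity, the degeneracy of~\eqref{Pv1} at the endpoints is absorbed by the substitution: indeed $\phi_y = (b-a)\sin(y)/2$ vanishes linearly at $y\in\{0,\pi\}$, which forces $v_x = h_y/\phi_y$ to make sense only if $h_y$ also vanishes there, something that is automatic for even $2\pi$-periodic functions. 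A direct calculation (taking into account the chain rule and the fact that $a,b$ depend on $t$) rewrites \eqref{Pv1} together with the ODEs for $a'$ and $b'$ as an abstract quasilinear evolution system
\[
\frac{d}{dt}(h,a,b) = \bigl(\mathcal{A}(h,a,b)\,h + f(h,a,b),\, g_a(h,a,b),\, g_b(h,a,b)\bigr),\quad (h,a,b)(0) = (h_0,a_0,b_0),
\]
on the triple $h_e^{\alpha}(\s)\times\R\times\R$ with domain $h_e^{2+\alpha}(\s)\times\R\times\R$, where $\mathcal{A}(h,a,b)$ is a second order linear differential operator in $y$ whose top-order coefficient, after the substitution, is strictly positive on all of $\s$ exactly under the open conditions $v_x(a)>0 > v_x(b)$, i.e.\ $h_y(0)>0>h_y(\pi)$ (equivalently, after accounting for the sign, $\pm(b-a)/(2)\cdot h$ has appropriate sign). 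The boundary relations \eqref{SoLa}, by Lemma~\ref{L:A-1}, translate into smoothness at $\{0,\pi\}$ of the extended $h$ and need not be enforced separately.

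The next step is to verify that $\mathcal{A}(h,a,b)$ is a uniformly elliptic operator on $\s$ with coefficients in $h^\alpha_e(\s)$ depending smoothly on $(h,a,b)$, and that $-\mathcal{A}(h,a,b)$ generates an analytic semigroup on $h^\alpha_e(\s)$ with domain $h^{2+\alpha}_e(\s)$; together with the interpolation identity \eqref{Intprop}, this is precisely the framework needed to invoke the quasilinear existence-uniqueness theorem for abstract parabolic problems in continuous interpolation scales (see, e.g.\ Lunardi's treatment based on \cite{DG79}, or the Cauchy problem formulation used in \cite{ESim98}). Uniform ellipticity reduces to checking that the leading coefficient of $\mathcal{A}(h,a,b)$ is bounded below by a positive constant on $\s$; this comes from the identity $2v + v_x^2 > 0$ in $[a,b]$ and the observation that, after the cosine substitution, a factor of $\sin^2(y)$ cancels between numerator and denominator in the top-order term, yielding a smooth, strictly positive coefficient. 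Local existence, uniqueness, and continuation to a maximal time $t^+$ then follow by the standard theory, giving the stated regularity $h \in C^1([0,t^+),h^\alpha_e(\s))\cap C([0,t^+),h^{2+\alpha}_e(\s))$ and $a,b\in C^1([0,t^+),\R)$, with $a(t)<b(t)$ preserved by openness of the set of admissible initial data.

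To obtain the real analytic smoothing $a,b\in C^\omega((0,t^+))$ and $v\in C^\omega$ on the open set where it is positive, I apply the parameter trick of Escher–Prüss–Simonett: for $(\lambda,\mu)$ close to $(1,0)$ in $\R\times\R$, consider the rescaled/translated family $h_{\lambda,\mu}(t,y):=h(\lambda t, y+\mu)$ together with the corresponding transformations of $(a,b)$. Because the original system is autonomous in $t$ and, after evenly extending to $\s$, translation-invariant in $y$, the map $(\lambda,\mu)\mapsto (h_{\lambda,\mu},a_{\lambda},b_{\lambda})$ takes values in the space of solutions of a parameter-dependent version of the same quasilinear problem, depending analytically on $(\lambda,\mu)$. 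The implicit function theorem applied to the solution operator in a neighborhood of a point $t_0\in(0,t^+)$ then transfers the analytic dependence on $(\lambda,\mu)$ to analytic dependence on $(t,y)$ for $(h,a,b)$; undoing the cosine substitution on the open set where $v>0$ and $b-a>0$ yields analyticity of $v$ as claimed.

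The main obstacle I expect is the second step: verifying that $\mathcal{A}(h,a,b)$ is genuinely uniformly elliptic on all of $\s$ (in particular at $y\in\{0,\pi\}$) after the change of variables, and that the resulting coefficients truly land in $h^\alpha_e(\s)$ with analytic dependence on $(h,a,b)$. The degeneracy of the original coefficient $2v/(2v+v_x^2)$ at $x\in\{a,b\}$ is exactly cancelled by the factor $\sin^2(y)$ arising from $\phi_y^2$, but the bookkeeping — including that the lower order contributions from the $t$-differentiation of $\phi_{a(t),b(t)}$ and from the ODEs $a',b'$ combine to give coefficients that are even and smooth across $\{0,\pi\}$ — is delicate. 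Once this structural verification is done, generation of an analytic semigroup on $h^\alpha_e(\s)$ reduces to a standard perturbation/localization argument for second order uniformly elliptic operators on the compact manifold $\s$, and the rest of the proof proceeds by the established abstract machinery.
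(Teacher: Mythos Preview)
Your proposal has the right overall architecture (cosine substitution, abstract parabolic theory, parameter trick) but contains two genuine gaps.

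First, after the substitution the problem is \emph{fully nonlinear}, not quasilinear, and its linearization is not a standard uniformly elliptic operator on $\s$. The leading coefficient of $h_{yy}$ is indeed positive and $\alpha$-H\"older, but it involves $h/\sin^2$ and $(h_y/\sin)^2$, each of which costs the full two derivatives: by Lemma~\ref{L:1} the maps $h\mapsto h/\sin^2$ and $h\mapsto h'/\sin$ send $\E_1\to {\rm h}^\alpha_e(\s)$ and no better. Hence the coefficients of your $\mathcal{A}(h,a,b)$ (and the nonlocal terms $h_{yy}(t,0)$, $h_{yy}(t,\pi)$ that also appear) depend on $h$ through its full $\E_1$-data, so no quasilinear theory---which requires the coefficients to factor through an intermediate space---applies; one must use the fully nonlinear framework of \cite{L95}. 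More seriously, when one computes the Fr\'echet derivative $\p_h\Phi_1(h_0,c_0,d_0)[h]$ these same quotients contribute at top order: after the perturbation arguments of Section~\ref{Sec:2} the principal part is $\bA[h]=A_1\big(h''+h'/\tan\big)$, a Bessel-type operator whose first-order coefficient is singular at $0$ and $\pi$. This is \emph{not} covered by standard elliptic theory on the compact manifold $\s$; the generator property (Theorem~\ref{T:31}) is obtained by localizing near $0$ and $\pi$ and recognizing the frozen-coefficient model $A_1(0)(\p_x^2+x^{-1}\p_x)$ as the restriction of $A_1(0)\Delta_{\R^2}$ to radially symmetric functions, so that the resolvent estimate for the two-dimensional Laplacian on ${\rm h}^\alpha(\R^2)$ can be imported. (A minor slip: since $h$ is even, $h_y(0)=h_y(\pi)=0$; the correct translation of $v_x(a)>0>v_x(b)$ is $h_{yy}(0)>0$, $h_{yy}(\pi)>0$.)

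Second, the spatial part of your parameter trick fails: the transformed equation~\eqref{TP1} contains $\sin,\cos,\tan$ explicitly and is therefore not translation-invariant in $y$, and in any case the shift $y\mapsto y+\mu$ destroys evenness, so $h_{\lambda,\mu}$ leaves the solution space ${\rm h}^{2+\alpha}_e(\s)$. The paper instead perturbs the diffeomorphism itself, setting $\phi_\lambda(t,x)=c(t)-d(t)\cos x+t\lambda\sin^2 x$ (which preserves the even structure and fixes the endpoints), shows that $h(\lambda):=v(\cdot,\phi_\lambda(\cdot,\cdot))$ solves a $\lambda$-dependent fully nonlinear problem of the same type, and deduces analytic dependence on $\lambda$ from \cite[Corollary~8.4.6]{L95}; composing with the real-analytic change $(t,x)\mapsto\lambda$ then yields real-analyticity of $h$ on $(0,t^+)\times(0,\pi)$.
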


 \begin{rem}\label{R:21}
 \begin{itemize}
 \item[(i)] The choice of the small H\"older spaces is essential. Indeed, using  a singular transformation from \cite{An88},
  we may recast the evolution problem \eqref{Pv}
 as a fully nonlinear  evolution equation with the leading order term in $\eqref{Pv1}_1$ having in the linearisation - when working within this class of functions - a positive and bounded coefficient. Besides, the setting of small H\"older spaces is a smart  choice when dealing with fully nonlinear parabolic equations, cf. e.g. \cite{DG79, L95}. 
 A further departure of these spaces from the classical H\"older spaces is illustrated in Lemma \ref{L:A1}.
 \item[(ii)] The problem considered in \cite{An88} is general enough to include also \eqref{Pv}. 
 However, the technical details, see Section \ref{Sec:3}, are different from those in  \cite{An88} and also simpler.  
  Besides, the parabolic smoothing property for $v$ in Theorem \ref{MT1} is a new result in this degenerate parabolic  setting and it extends also to the general problem
    considered in \cite{An88}.
  In particular, this proves in the context of  the porous medium equation (which is the equation that motivates the analysis  in \cite{An88})
   that the interface separating a fluid blob, that expends under the effect of gravity,  from air is real-analytic in the positivity set, see \cite{Va07} for more references on this topic.  
  
  \item[(iii)] If $v_0\in {\rm h}^{2+\alpha}([a_0,b_0])$  satisfies $v_0(a_0)=v_0(b_0),$ $v_0>0$ in $(a_0,b_0)$, and  
 \[ v_0'(a_0)>0> v_0'(b_0),\]
 then $v_0$ can be chosen as an initial condition in \eqref{Pv}. However,
 the initial data $v_0$ in Theorem \ref{MT1} are not required to be twice differentiable at $x=a_0$ and $x=b_0$. For example if $b_0=-a_0=1$, then 
 \[
v_0(x):= 1-x^2+(1-x^2)^{3/2} , \qquad x\in[-1,1],
 \] 
 can be chosen as an initial condition in \eqref{Pv}  as $v_0\circ (-\cos)\in W^3_\infty(\s)$, but $v_0\not\in {\rm C}^2([-1,1])$.
 \item[(iv)] The assumption  that $h_0:= v_0((a_0+b_0)/2-(b_0-a_0)\cos/2) \in {\rm h}_e^{2+\alpha}(\s)$ guarantees that the nonlinear boundary condition $\eqref{Pv1}_5$
 holds at $t=0$. Indeed,    since 
 \[
(v_0v_0'')\circ((a_0+b_0)/2-(b_0-a_0)\cos/2)= \frac{4}{(b-a)^2}\frac{h_0}{\sin^2}\Big(h_0''-\frac{h_0'}{\tan}\Big),
 \]
l'Hospital's rule shows  that   $\underset{x\to a_0}\lim(v_0v_{0}'')(x)=\underset{x\to b_0}\lim(v_0v_{0}'')(x)=0$.
 \end{itemize}
 \end{rem}
 
 
\section{The transformed problem}\label{Sec:2}

In order to study \eqref{Pv} we use an idea from  \cite{An88} and transform the evolution problem \eqref{Pv} into a system defined in the setting of periodic functions 
 by using a diffeomorphism that has a  
first derivative which is singular at the points $a(t) $ and $b(t)$.
More precisely, we introduce the new unknown 
$$h(t,x):=v(t,c(t)-d(t)\cos(x)),\qquad x\in\R,\, t\geq0,$$
where
\[
c(t):=\frac{a(t)+b(t)}{2}\quad\text{and}\quad d(t):=\frac{b(t)-a(t)}{2}>0.
\]
Given  $t\geq0$,   $h(t)$ is a $2\pi$-periodic function on $\R$ which is even and  merely the continuous  differentiability of $v(t)$   implies that
\[
h_{xx}(t,0)= \frac{b(t)-a(t)}{a'(t)}>0 \quad\text{and}\quad h_{xx}(t,\pi)= -\frac{b(t)-a(t)}{b'(t)}>0.
\] 
In terms of the new variable $(h,c,d)$ the problem \eqref{Pv} can be recast as follows
 \begin{subequations}\label{TP}
 \begin{equation}\label{TP1}
\left\{
\begin{array}{rlllll}
& h_t = \cfrac{2}{2d^2h+ h_x^2/\sin^2}\cfrac{h}{\sin^2}
 \Big(h_{xx}-  \cfrac{h_x}{\tan} \Big)
 -\cfrac{ h_x^2/\sin^2}{2d^2h+ h_x^2/\sin^2}-1\\[2ex]
&\hspace{1cm} + \Big(\cfrac{1+\cos}{h_{xx}(t,0)}-\cfrac{1-\cos}{h_{xx}(t,\pi)}\Big)\cfrac{h_x}{\sin}, \quad t\geq0, \, x\in\R,\\[2ex]
  &c' =d\Big(\cfrac{1}{h_{xx}(t,0)}-\cfrac{1}{h_{xx}(t,\pi)}\Big),\quad t\geq0,\\[2ex]
 &d' =-d\Big(\cfrac{1}{h_{xx}(t,0)}+\cfrac{1}{h_{xx}(t,\pi)}\Big),\quad t\geq0,\\[2ex]
&d(t)>0,\, h(t,0)=h(t,\pi)=0,\quad t\geq0,\\[2ex]
 & h(t,x) >0,\quad   t\geq0,\, x\in(0,\pi),\\[2ex]
  & h_{xx}(t,0)>0,\,   h_{xx}(t,\pi) >0,\quad    t\geq0, 
\end{array}
\right.
\end{equation}
 with initial conditions
  \begin{align}\label{PP2}
h(0)=h_0:=v_0(c_0-d_0\cos),\qquad c(0)=\frac{a_0+b_0}{2},\qquad d(0)=\frac{b_0-a_0}{2}. 
\end{align}
 \end{subequations}
We point out that nonlinear boundary condition $\eqref{Pv1}_5$ has not
been  taken into account in the transformed system \eqref{TP}. 
This is due to the choice of the function spaces below as, similarly as in Remark~\ref{R:21}~(iv), requiring that $h(t)\in\E_1$ ensures that $\eqref{Pv1}_5$
holds at time $t\geq0$.

In order to study  \eqref{TP}  we choose as an appropriate  framework  the setting of periodic small H\"older spaces. 
 For a  fixed $\alpha\in(0,1)$ we  define the Banach spaces
\begin{align*}
& \E_0:=\{h\in {\rm h}^\alpha_e(\s)\,:\, \text{ $h(0)=h(\pi)=0$}\},\\[1ex]
& \E_1:=\{h\in {\rm h}^{2+\alpha}_ e(\s)\,:\, \text{ $h(0)=h(\pi)=0$}\},
\end{align*}
with the corresponding norms $\|\cdot\|_i=\|\cdot\|_{ {\rm C}^{2i+\alpha}(\s)},$ $i\in\{0,1\}.$
It is important to point out that the embedding $\E_1\hookrightarrow\E_0$ is dense.
Though at formal level the equation $\eqref{TP1}_1$ has a  quasilinear structure, our analysis below shows that the problem \eqref{TP}   is actually (as a result of the boundary conditions)
 fully nonlinear (see Lemma \ref{L:1} and the subsequent discussion).
This loss of linearity is however compensated by the fact  that none of  the terms on the  right hand side of $\eqref{TP1}_1$ is singular when choosing $h\in \E_1$. 
Moreover the  function multiplying $h_{xx}$ in $\eqref{TP1}_1$ is now $\alpha$-H\"older continuous and positive.

\begin{lemma}\label{L:1}
The operators 
\[
\Big[h\mapsto \frac{h}{\sin^2}\Big],\, \Big[h\mapsto \frac{ h'}{\sin}\Big]:\E_1\to {\rm h}^{\alpha}_e(\s)
\]
are bounded.
\end{lemma}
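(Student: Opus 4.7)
My plan is to prove both bounds in two stages: first establish an estimate from $\E_1$ into the classical H\"older space ${\rm C}^\alpha_e(\s)$, and then upgrade the target to the small H\"older space ${\rm h}^\alpha_e(\s)$ by a density argument. The two operators are handled in parallel since the arguments are essentially identical.

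First I would record the two parity facts that drive all the cancellation. For $h\in\E_1$, evenness and $2\pi$-periodicity together imply that $h$ is even around $\pi$ as well as around $0$ (since $h(\pi-y)=h(y-\pi)=h(\pi+y)$), so $h(0)=h(\pi)=0$ forces $h'(0)=h'(\pi)=0$. Taylor's theorem with integral remainder then yields
\begin{equation*}
\frac{h(x)}{x^2}=\int_0^1(1-s)h''(sx)\,ds,\qquad \frac{h'(x)}{x}=\int_0^1 h''(sx)\,ds
\end{equation*}
on a neighbourhood of $0$, together with the analogous formulas near $\pi$ after the substitution $y:=x-\pi$. Estimating the $\alpha$-H\"older seminorm of the right-hand sides by pulling $|h''(sx)-h''(sy)|\leq[h'']_{{\rm C}^\alpha(\s)}\,s^\alpha|x-y|^\alpha$ out of the integrals and multiplying by the smooth factors $(x/\sin x)^2$ and $x/\sin x$ yields local ${\rm C}^\alpha$ bounds controlled by $\|h\|_{\E_1}$. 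A smooth partition of unity subordinate to small neighbourhoods of $\{0,\pi\}$ together with a compact set where $|\sin|$ is bounded below then patches these into global estimates $\|h/\sin^2\|_{{\rm C}^\alpha(\s)}+\|h'/\sin\|_{{\rm C}^\alpha(\s)}\leq C\|h\|_{\E_1}$.

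For the upgrade to the small H\"older space, I would use that ${\rm h}^{2+\alpha}_e(\s)$ is by definition the ${\rm C}^{2+\alpha}$-closure of ${\rm C}^\infty_e(\s)$, so any $h\in\E_1$ is the ${\rm C}^{2+\alpha}$-limit of smooth even functions $h_n$. To preserve the boundary conditions, I would replace $h_n$ by $h_n$ minus the smooth even correction $\tfrac12 h_n(0)(1+\cos)+\tfrac12 h_n(\pi)(1-\cos)$, which tends to $0$ in $\E_1$ because $h_n(0)\to h(0)=0$ and $h_n(\pi)\to h(\pi)=0$. For the resulting smooth $h_n\in\E_1$ the quotients $h_n/\sin^2$ and $h_n'/\sin$ are smooth and thus belong to ${\rm h}^\alpha_e(\s)$; the first-stage bound applied to $h-h_n$ then shows that these quotient sequences are ${\rm C}^\alpha$-Cauchy with limits $h/\sin^2$ and $h'/\sin$, which must therefore lie in ${\rm h}^\alpha_e(\s)$.

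The main obstacle is ensuring good behaviour at both singular points simultaneously: near $0$ the Taylor-remainder argument is direct, but near $\pi$ it relies crucially on the evenness of $h$ around $\pi$ (and hence on $h'(\pi)=0$), which without the $2\pi$-periodicity would fail. The density step is otherwise mechanical provided one remembers the correction preserving the boundary values.
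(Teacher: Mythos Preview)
Your proof is correct. The paper does not prove this lemma directly but merely cites \cite[Lemma~2.1]{An88}; your argument via Taylor's integral remainder and the smooth factors $x/\sin x$ is the standard one and is exactly the technique the paper itself spells out later in the proof of Lemma~\ref{L22}, while the correction $\tfrac12 h_n(0)(1+\cos)+\tfrac12 h_n(\pi)(1-\cos)$ in your density step mirrors the $\wt h$ construction the paper uses in Section~\ref{Sec:2} to restore membership in $\E_1$.
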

\begin{proof}
 See  \cite[Lemma 2.1]{An88}.
\end{proof}
We  emphasize that  it is not possible to choose in Lemma \ref{L:1} as  target space a small H\"older space ${\rm h}^{\alpha'}_e(\s) $
 with $\alpha'>\alpha$.
In particular, the terms $h/\sin^2$ and $h_x/\tan$ on the right-hand side of $\eqref{TP1}_1$ have the same importance as $h_{xx}$ when linearizing this expression.

We now set 
\[
\cO:=\{h\in \E_1\,:\, \text{$h''(0)>0,$ $h''(\pi)>0$ and $h>0$ in $(0,\pi)$}\}.
\] 
Then, $\cO$ is an open subset of $\E_1$.
Let further
\begin{align*}
\Phi:=(\Phi_1,\Phi_2, \Phi_3):\cO\times\R\times (0,\infty)\subset \E_1\times \R^2\to\E_0\times \R^2
\end{align*}
be the operator defined by 
\begin{align*} 
&\Phi_1(h,c,d):= \cfrac{2}{2d^2h+ h'^2/\sin^2}\cfrac{h}{\sin^2}
 \Big(h''- \cfrac{h'}{\tan} \Big)
 -\cfrac{ h'^2/\sin^2}{2d^2h+h'^2/\sin^2}-1+\Big(\cfrac{1+\cos}{h''(0)}-\cfrac{1-\cos}{h''(\pi)}\Big)\cfrac{h'}{\sin},\\[1ex]
&\Phi_2(h,c,d):=d\Big(\cfrac{1}{h''(0)}-\cfrac{1}{h''(\pi)}\Big),\\[1ex]
&\Phi_3(h,c,d):=-d\Big(\cfrac{1}{h''(0)}+\cfrac{1}{h''(\pi)}\Big).
\end{align*}
It is not difficult to check  that $\Phi_1(h,c,d)|_{x=0}=\Phi_1(h,c,d)|_{x=\pi}=0,$ so that  $\Phi$ is well-defined. In virtue of Lemma \ref{L:1} it further holds that 
\begin{align}\label{REG}
\Phi\in {\rm C}^\omega(\cO\times\R\times (0,\infty) ,\E_0\times\R^2).
\end{align}
Hence, we are led to the fully nonlinear evolution problem
\begin{align}\label{FNP}
(\dot h,\dot c,\dot d)=\Phi(h,c,d),\, \, t\geq0,\qquad (h(0),c(0), d(0))=(h_0,c_0,d_0),
\end{align}
with $(h_0,c_0,d_0)\in\cO\times\R\times (0,\infty)$.
We shall establish the existence and uniqueness of strict solutions (in the sense of  \cite{L95}) to \eqref{FNP}  by using the fully nonlinear parabolic 
theory presented in the monograph \cite{L95}.
To this end we next   identify the Fr\'echet derivative $\p\Phi(h_0,c_0,d_0)$ and we prove that it generates, for each $(h_0,c_0,d_0)\in\cO\times\R\times (0,\infty)$, a 
strongly continuous and analytic semigroup. In the notation of
Amann \cite{Am95} this means by definition
\[
-\p\Phi(h_0,c_0,d_0)\in\kH(\E_1\times\R^2,\E_0\times\R^2).
\]
In fact, in view of \cite[Corollary I.1.6.3]{Am95}, we only need
to show that   the partial derivative
$\p_h\Phi_1(h_0,c_0,d_0)$ generates a strongly
continuous analytic semigroup  in $\kL(\E_0).$
 Given $h\in\E_1$, it holds that 
 \begin{align*}
 \p_h\Phi_1(h_0,c_0,d_0)[h]&= A_1\Big(h''-\cfrac{h'}{\tan} \Big) 
 +A_2 \cfrac{h'}{\sin}+A_3\frac{h}{\sin^2}+A_4h +A_5h''(0)+A_6h''(\pi)
 \end{align*}
where
\begin{align*}
A_1&:= \cfrac{2}{2d^2_0h_0+ h'^2_0/\sin^2}\cfrac{h_0}{\sin^2},\\[1ex]
A_2&:=  \cfrac{2 }{ \big(2d^2_0h_0+ h'^2_0/\sin^2\big)^2}
\Big[\cfrac{h_0'^2}{\sin^2} - \cfrac{2h_0}{\sin^2}\Big( h_0''-\cfrac{h_0'}{\tan} \Big) \Big]
  \cfrac{h_0'}{\sin }
  - \cfrac{2 }{ 2d^2_0h_0+ h'^2_0/\sin^2  }
\cfrac{h_0'}{\sin}        + \cfrac{1+\cos}{h_0''(0)}-\cfrac{1-\cos}{h_0''(\pi)},   \\[1ex]          
  A_3& :=\cfrac{2}{2d^2_0h_0+ h'^2_0/\sin^2}\Big( h_0''-\cfrac{h_0'}{\tan} \Big),\quad A_4:=  \cfrac{2d_0^2 }{ \big(2d^2_0h_0+ h'^2_0/\sin^2\big)^2}
\Big[\cfrac{h_0'^2}{\sin^2} - \cfrac{2h_0}{\sin^2}\Big( h_0''-\cfrac{h_0'}{\tan} \Big) \Big],\\[1ex]
A_5&:=-\cfrac{h'_0}{\sin}\frac{1+\cos}{(h_0''(0))^2},\quad A_6:=\cfrac{h'_0}{\sin}\frac{1-\cos}{(h_0''(\pi))^2}.
\end{align*}
We note that $A_i\in {\rm h}^\alpha_e(\s)$, $1\leq i\leq 6$, with $A_1$ being positive. Moreover, it holds that $A_3\in\E_0$.

Since
\[
\|A_4 h\|_0\leq \|A_4\|_0\|h\|_{0} \qquad\text{for all $h\in\E_1$},
\]
the operator $[h\mapsto A_4h]:\E_1\to\E_0 $ may be viewed as being a lower order perturbation of  $\p_h\Phi_1(h_0,c_0,d_0)$,  cf. \cite[Theorem I.1.3.1 (ii)]{Am95}.
The following result enables us to regard also other terms of $\p_h\Phi_1(h_0,c_0,d_0)$   as being lower order perturbations.
\begin{lemma}\label{L22}
 Let $A\in \E_0$.
 Then, given $\e>0$, there exists a constant $C(\e)>0$ such that
 \begin{align}\label{Pert}
 \Big\|A \frac{h}{\sin^2}\Big\|_0+\Big\|A \frac{h'}{\sin}\Big\|_0\leq \e\|h\|_1+C(\e)\|h\|_0
 \qquad\text{for all $h\in\E_1.$}
 \end{align}
\end{lemma}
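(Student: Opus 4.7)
The plan is to use the density of smooth functions inside the small Hölder space $\E_0$ to split $A$ into a piece whose $C^\alpha$-norm is arbitrarily small (absorbed into the $\e\|h\|_1$ term via Lemma \ref{L:1}) and a smooth piece, for which the multipliers $\tilde A/\sin^2$ and $\tilde A/\sin$ are themselves regular and can only see lower derivatives of $h$.

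\textbf{Step 1.} Fix $\delta>0$, to be chosen later. Because $A\in\E_0$ lies in ${\rm h}^\alpha_e(\s)$ and vanishes at $0$ and $\pi$, there exists a smooth even function $\tilde A\in C^\infty(\s)$ with $\tilde A(0)=\tilde A(\pi)=0$ and $\|A-\tilde A\|_0<\delta$. Indeed, any $C^\alpha$-approximation $B_n$ of $A$ by smooth even functions may be corrected to vanish at $0$ and $\pi$ by subtracting $B_n(0)\phi_0+B_n(\pi)\phi_\pi$, where $\phi_0,\phi_\pi$ are fixed smooth even bump functions attaining the appropriate values at $0$ and $\pi$; since $B_n(0),B_n(\pi)\to 0$, this correction is arbitrarily small in $C^\alpha$.

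\textbf{Step 2.} Split $A=(A-\tilde A)+\tilde A$. Using that $C^\alpha$ is a Banach algebra together with Lemma \ref{L:1},
\[
\Bigl\|(A-\tilde A)\tfrac{h}{\sin^2}\Bigr\|_0+\Bigl\|(A-\tilde A)\tfrac{h'}{\sin}\Bigr\|_0\le C\|A-\tilde A\|_0\|h\|_1\le C\delta\|h\|_1.
\]
For the smooth remainder, the evenness of $\tilde A$ forces $\tilde A(x)=O(x^2)$ near $0$ and $O((x-\pi)^2)$ near $\pi$, so $\tilde A/\sin^2$ and $\tilde A/\sin$ extend to smooth $2\pi$-periodic functions on $\s$. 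Hence
\[
\Bigl\|\tilde A\tfrac{h}{\sin^2}\Bigr\|_0\le C_{\tilde A}\|h\|_0,\qquad \Bigl\|\tilde A\tfrac{h'}{\sin}\Bigr\|_0\le C_{\tilde A}\|h\|_{C^{1+\alpha}(\s)}.
\]

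\textbf{Step 3.} It remains to handle the intermediate norm $\|h\|_{C^{1+\alpha}(\s)}$. The interpolation identity \eqref{Intprop} with $\theta=1/2$ yields $\|h\|_{C^{1+\alpha}(\s)}\le C\|h\|_0^{1/2}\|h\|_1^{1/2}$, and Young's inequality turns this into $\|h\|_{C^{1+\alpha}(\s)}\le\eta\|h\|_1+C_\eta\|h\|_0$ for arbitrarily small $\eta>0$. Choosing first $\delta$ so small that $2C\delta<\e/2$ (this fixes $\tilde A$ and hence $C_{\tilde A}$) and then $\eta$ so small that $C_{\tilde A}\eta<\e/2$, the desired bound $\e\|h\|_1+C(\e)\|h\|_0$ follows.

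The only delicate point I anticipate is Step 1, namely the verification that smooth functions vanishing at the two singular points form a dense subset of $\E_0$; this vanishing is crucial because without it $\tilde A/\sin^2$ and $\tilde A/\sin$ would not be regular. Beyond this, the argument is a standard combination of Lemma \ref{L:1}, the multiplicative structure of $C^\alpha$, and Hölder interpolation between $\E_0$ and $\E_1$.
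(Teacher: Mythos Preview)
Your proof is correct, and it proceeds along a genuinely different route than the paper's argument.

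The paper does not split $A$ at all. Instead it writes, on each half of the interval $I=[-2\pi/3,5\pi/3]$, an integral representation such as
\[
\frac{h'(x)}{\sin x}=\varphi_1(x)\int_0^1 h''(sx)\,ds,\qquad \varphi_1(x)=\frac{x}{\sin x}\in C^\infty,
\]
so that $Ah'/\sin$ is a $C^\alpha$ function times an averaging of $h''$. The only place where the top norm $[h'']_\alpha$ enters the product estimate is through the term $\|A\varphi_1\|_{C^0}\,[g]_\alpha$ with $g(x)=\int_0^1 h''(sx)\,ds$; since $A(0)=0$ (and similarly $A(\pi)=0$), the factor $\|A\varphi_1\|_{C^0}$ can be made arbitrarily small after localizing to a short interval around the singular point, while away from $0,\pi$ the quotient $h'/\sin$ is controlled by $\|h\|_{C^{1+\alpha}}$ and one interpolates. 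The argument for $h/\sin^2$ is analogous.

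Your approach instead exploits the very definition of the small H\"older space: approximate $A\in\E_0$ in $C^\alpha$ by a smooth even $\tilde A$ vanishing at $0,\pi$, feed the small remainder $A-\tilde A$ into Lemma~\ref{L:1}, and observe that $\tilde A/\sin^2$, $\tilde A/\sin$ are themselves smooth multipliers. This is cleaner and uses Lemma~\ref{L:1} as a black box, whereas the paper essentially re-derives the relevant part of that lemma through the integral formula. On the other hand, the paper's argument uses only continuity of $A$ together with $A(0)=A(\pi)=0$ for the smallness step, so it in fact proves the estimate for $A$ in the classical space $C^\alpha_e(\s)$ vanishing at $0,\pi$, not merely for $A\in\E_0$; your density argument genuinely needs the little-H\"older hypothesis. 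Both approaches then finish with the same interpolation between $\E_0$ and $\E_1$.
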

\begin{proof}
Letting $I:=[-2\pi/3,5\pi/3]$, it is not difficult to verify that
\begin{align}\label{EQN}
\|h\|_{{\rm C}^{k+\alpha }(I)}\leq \|h\|_{{\rm C}^{k+\alpha }(\s)}\leq 5\|h\|_{{\rm C}^{k+\alpha}(I)},\qquad k\in\N,\,\alpha\in(0,1),\, h\in {\rm C}^{k+\alpha}(\s).
\end{align}
 In view of this equivalence, the claim for $[h\mapsto A   h'/\sin ]$ follows from the observation that
 \begin{align*}
 &\frac{ h'(x)}{\sin(x)}=\varphi_1(x)\int_{0}^1h''(sx)\, ds,\quad x\in[-2\pi/3,\pi/2],\\[1ex]
  &\frac{ h'(x)}{\sin(x)}=\varphi_2(x)\int_{0}^1h''((1-s)\pi+sx)\, ds,\quad x\in[\pi/2,5\pi/3],
 \end{align*}
 where the functions $\varphi_1(x):=x/\sin(x)$ and $\varphi_2(x):=(x-\pi)/\sin(x)$  belong to ${\rm C}^\infty([-2\pi/3,\pi/2]) $ and ${\rm C}^\infty([\pi/2,5\pi/3]),$ respectively.
 The proof of the second claim follows by similar arguments.
\end{proof}

Recalling that $A_3\in\E_0$,  Lemma \ref{L22} implies that also  $[h\mapsto A_3 h/\sin^2]:\E_1\to\E_0$ can be viewed as being a perturbation.
Let us now notice that 
$$A_2(0)=2/h_0''(0)=2A_1(0)\quad\text{and}\quad A_2(\pi)=-2/h_0''(\pi)=-2A_1(\pi).$$
Observing that
\begin{align*}
A_2 \cfrac{h'}{\sin}=(A_2-2A_1\cos)\cfrac{h'}{\sin}+2A_1\cfrac{h'}{\tan},
\end{align*}
where $A_2-2A_1\cos\in\E_0$, we may regard in view of  Lemma \ref{L22} also the operator  
$$[h\mapsto(A_2-2A_1\cos) h'/\sin]:\E_1\to\E_0$$
 as being a perturbation
 and we are left  to prove the generator property for  
\[
\wt\bA:=\Big[h\mapsto A_1\Big(h''+\cfrac{h'}{\tan} \Big) +A_5h''(0)+A_6h''(\pi)\Big]:\E_1\to\E_0
.
\]
In fact, it suffices to establish  the generator property
for the operator
\begin{align}\label{bA}
\bA :=\Big[h\mapsto A_1\Big(h''+\cfrac{h'}{\tan} \Big)\Big]:{\rm h}_e^{2+\alpha}(\s)\to{\rm h}_e^{\alpha}(\s)
\end{align}
where we have dropped the lower order term $[h\mapsto A_5h''(0)+ A_6h''(\pi)].$
Indeed, assuming that $-\bA\in\kH({\rm h}_e^{2+\alpha}(\s),{\rm h}_e^{\alpha}(\s))$, it follows  $-\wt\bA\in\kH({\rm h}_e^{2+\alpha}(\s),{\rm h}_e^{\alpha}(\s))$.
This latter property is equivalent to the existence of constants $\kappa\geq 1$ and $\omega>0$ such that
\begin{align*}
&\text{(1) \quad $\omega-\wt\bA:{\rm h}_e^{2+\alpha}(\s)\to {\rm h}_e^{\alpha}(\s)$ is an isomorphism, and}\\[1ex]
&\text{(2) \quad $ \kappa^{-1}\leq\cfrac{\|(\lambda-\wt\bA)[h]\|_0}{|\lambda|\cdot\|h\|_0+\|h\|_1}\leq\kappa$ 
for all $\re\lambda\geq\omega$ and $0\neq h\in {\rm h}_e^{2+\alpha}(\s)$,} 
\end{align*}
 cf. \cite[Chapter I]{Am95}. The relation $(2)$ holds in particular for $0\neq h\in\E_1$. 
In order to conclude that $-\wt \bA\in\kH(\E_1,\E_0),$ we are thus left to show that $\omega-\wt\bA:\E_1\to\E_0$ is an isomorphism too.
Hence, given $f\in\E_0$, for $h\in{\rm h}_e^{2+\alpha}(\s) $ with $(\omega- \wt\bA)[h]=f $ we set
\[
\wt h:=\frac{1+\cos}{2}h(0)+\frac{1-\cos}{2}h(\pi).
\]
Taking into account that  $h-\wt h\in\E_1,$ it follows that $(\omega-\wt \bA)[\wt h]\in \E_0.$ 
A simple computation shows that $ \wt \bA [\wt h]\in\E_0$, so that also $\wt h\in\E_0$. We may thus  conclude that $h\in\E_1$, so that $(1)$ holds also when replacing 
${\rm h}_e^{2i+\alpha}(\s)$ with $\E_i$, $i\in\{0,\,1\}$.
The nontrivial property $-\bA\in\kH({\rm h}_e^{2+\alpha}(\s),{\rm h}_e^{\alpha}(\s))$ is  established in detail in Section \ref{Sec:3} below, cf. Theorem \ref{T:31}.

 
\section{The generator property}\label{Sec:3}
The  first goal of this section is to establish Theorem \ref{T:31}, which is a main ingredient in the proof of the  main result.

\begin{thm}\label{T:31}
Given $h_0\in\cO$, it holds that $-\bA\in\kH({\rm h}_e^{2+\alpha}(\s),{\rm h}_e^{\alpha}(\s))$.
\end{thm}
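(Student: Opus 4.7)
By \cite[Chapter I]{Am95}, to prove that $-\bA\in\kH({\rm h}^{2+\alpha}_e(\s),{\rm h}^\alpha_e(\s))$ it suffices to exhibit constants $\omega>0$ and $\kappa\geq 1$ such that $\omega-\bA:{\rm h}^{2+\alpha}_e(\s)\to{\rm h}^{\alpha}_e(\s)$ is bijective and the two-sided resolvent estimate
\[
\kappa^{-1}\bigl(|\lambda|\,\|h\|_{{\rm C}^\alpha(\s)}+\|h\|_{{\rm C}^{2+\alpha}(\s)}\bigr)\leq \|(\lambda-\bA)h\|_{{\rm C}^\alpha(\s)}\leq \kappa\bigl(|\lambda|\,\|h\|_{{\rm C}^\alpha(\s)}+\|h\|_{{\rm C}^{2+\alpha}(\s)}\bigr)
\]
holds for every $\re\lambda\geq\omega$ and $0\neq h\in{\rm h}^{2+\alpha}_e(\s)$. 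The plan is to combine a localization and coefficient-freezing argument with the analysis of a single constant-coefficient model operator near each of the two singular points $x=0$ and $x=\pi$.

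\textbf{Localization and freezing.} An explicit computation using $h_0\in\cO$ and L'H\^opital's rule yields $A_1(0)=1/h_0''(0)>0$ and $A_1(\pi)=1/h_0''(\pi)>0$, so the leading coefficient $A_1\in{\rm h}^\alpha_e(\s)$ is strictly positive on all of $\s$. I would fix a smooth partition of unity $\{\chi_0,\chi_\pi,\chi_{\mathrm{in}}\}$ subordinate to a covering of $\s$ by small neighbourhoods $U_0$ of $0$, $U_\pi$ of $\pi$, and an interior piece $U_{\mathrm{in}}$ with closure disjoint from $\{0,\pi\}$. On $U_{\mathrm{in}}$ the operator $\bA$ is uniformly elliptic with ${\rm h}^\alpha$-coefficients and standard Schauder-type resolvent estimates apply. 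On $U_0$ (and symmetrically on $U_\pi$) I would replace $A_1$ by the constant $A_1(0)$; since $A_1-A_1(0)$ vanishes at $0$ and can be made ${\rm h}^\alpha$-small by shrinking $U_0$, the remainder $(A_1-A_1(0))(h''+h'/\tan)$ together with the commutators $[\chi_\bullet,\bA]$ forms a lower-order perturbation, absorbable by a small multiple of $\|h\|_{{\rm C}^{2+\alpha}}$ plus a large multiple of $\|h\|_{{\rm C}^\alpha}$ via Lemma~\ref{L22} and the interpolation property \eqref{Intprop}.

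\textbf{The model problem.} The decisive step is to establish the generator property for the constant-coefficient model
\[
\bA_\star[h]:=h''+\cot(x)\,h'=\frac{1}{\sin x}\bigl(\sin(x)\,h'(x)\bigr)'
\]
on ${\rm h}_e^{2+\alpha}(\s)$. The right-hand identity displays $\bA_\star$ as the Laplace--Beltrami operator on $S^2$ acting on axisymmetric functions written in the polar angle $x$; spectrally, its eigenvalues on even periodic functions are $\{-\ell(\ell+1):\ell\in\N\}\subset(-\infty,0]$ with eigenfunctions $P_\ell(\cos x)$, which yields injectivity of $\omega-\bA_\star$ for every $\omega>0$ through an energy argument with the weight $\sin(x)\,dx$. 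To obtain resolvent estimates in the small H\"older topology I would localize near $x=0$ and identify an even $h$ with the rotationally invariant function $H(\xi):=h(|\xi|)$ on a disc in $\R^2$; the flat Laplacian satisfies $\Delta H(\xi)=h''(|\xi|)+h'(|\xi|)/|\xi|$, and the discrepancy
\[
\bA_\star[h]-\bigl(h''+h'/x\bigr)=h'(x)\,(\cot x-1/x)
\]
has a smooth coefficient vanishing at the origin, hence is a lower-order perturbation by the same token as above. Classical resolvent estimates for $-\Delta$ on ${\rm h}^\alpha(\R^2)$ \cite[Chapter 3]{L95} then transfer to the desired estimate on $U_0$, and the analogous construction around $x=\pi$ disposes of the second singular point.

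\textbf{Assembly, bijectivity, and main obstacle.} Patching the three local estimates through the partition of unity and absorbing the commutators $[\chi_\bullet,\bA]$ by interpolation yields the two-sided resolvent estimate globally. Bijectivity of $\omega-\bA$ for sufficiently large $\omega$ then follows by the method of continuity applied to the family $\bA_s:=\bigl((1-s)A_1(0)+sA_1\bigr)(h''+h'/\tan)$, $s\in[0,1]$, whose coefficient is a convex combination of positive functions and whose endpoint $s=0$ was already handled. The principal technical obstacle, where the bulk of the work resides, lies in the model problem: the correspondence $h(x)\leftrightarrow H(\xi)=h(|\xi|)$ between even periodic functions and rotationally invariant functions on $\R^2$ is not smooth through the origin, and one must verify carefully that ${\rm h}^{2+\alpha}_e(\s)$-regularity of $h$ corresponds, with quantitatively matching norms, to ${\rm h}^{2+\alpha}$-regularity of $H$ on a disc; this is precisely the kind of analysis carried out in \cite[\S2]{An88}, which, as advertised in Remark~\ref{R:21}(ii), should admit a simpler implementation in the present setting.
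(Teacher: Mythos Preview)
Your resolvent-estimate argument coincides with the paper's: Lemma~\ref{L31} performs the same localization and coefficient freezing (your discrepancy $\cot x-1/x$ is the paper's term $T_{2b}$), and the reduction of the frozen model near $x=0$ to the radial Laplacian on $\R^2$ via $H(\xi)=h(|\xi|)$, together with the norm comparison you flag as the main obstacle, is exactly how the paper obtains \eqref{EDI2}.

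Where you diverge is in establishing that $\omega-\bA$ is onto. The paper does not use the method of continuity: Lemma~\ref{L33} computes the Fredholm index of $\bA$ directly by solving the ODE $(h'\sin)'=(f/A_1)\sin$, showing that the kernel is the constants and the range is $\{f\in{\rm h}^\alpha_e(\s):\int_0^\pi (f/A_1)\sin\,dx=0\}$; index zero plus the injectivity coming from \eqref{EDI4} then give the isomorphism. Your continuity method along $\bA_s=\bigl((1-s)A_1(0)+sA_1\bigr)\bA_\star$ is a valid alternative, and your identification $\bA_\star=\Delta_{S^2}\big|_{\mathrm{axisymm}}$ is a pleasant observation the paper does not make explicit. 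Be aware, however, that your sketch leaves one point open: spectral injectivity together with the localized resolvent estimate do \emph{not} by themselves yield surjectivity of $\omega-\bA_\star$ on ${\rm h}^{2+\alpha}_e(\s)\to{\rm h}^\alpha_e(\s)$, which the continuity method requires at $s=0$. You can close this either by invoking classical Schauder solvability for $(\omega-\Delta_{S^2})u=f$ on the compact manifold $S^2$ and restricting to axisymmetric data, or by noting that the Fredholm computation of Lemma~\ref{L33} with constant $A_1$ already supplies the missing surjectivity---at which point one may as well run it for the actual $A_1$ and dispense with the continuity argument.
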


We  consider for $\e\in(0,\e_0]$, with $\e_0>0$  sufficiently small, partitions $\{\pi_1^\e,\, \pi_2^\e, \pi_3^\e\}\subset {\rm C}^{ \infty}(I,[0,1])$ of the interval $I=[-2\pi/3,5\pi/3]$ and corresponding  
 families $\{\chi_1^\e,\, \chi_2^\e,\, \chi_2^\e\}\subset {\rm C}^{ \infty}(I,[0,1])$ with the following properties
\begin{itemize}
\item $\pi_1^\e+\pi_2^\e+\pi_3^\e =1$ in ${\rm C}^{ \infty}(I)$; 
\item  ${\rm supp\,}(\pi_1^\e)= [-3\e,3\e]$, ${\rm supp\,}(\pi_2^\e)= [\pi-3\e,\pi+3\e]$,  ${\rm supp\,}(\pi_3^\e)=I\setminus\big(  [-2\e,2\e]\cup[\pi-2\e,\pi+2\e])$;
\item $\chi_i^\e=1$ on ${\rm supp\,}(\pi_i^\e)$, $1\leq  i\leq 3$;
\item ${\rm supp\,}(\chi_1^\e)= [-4\e,4\e]$, ${\rm supp\,}(\chi_2^\e)= [\pi-4\e,\pi+4\e]$,  ${\rm supp\,}(\chi_3^\e)=I\setminus\big(  [-\e,\e]\cup[\pi-\e,\pi+\e])$;
\item  $\pi_1^\e $ and $\pi_2^\e(\pi+\cdot)$ are even on $[-3\e,3\e]$;
 \item $\pi_3^\e $ has an even and periodic extension in ${\rm C}^\infty(\s)$.
 \end{itemize}
 Extending  $\pi_1^\e $ and $\pi_2^\e(\pi+\cdot)$  by zero in $\R\setminus[-3\e,3\e]$, we may view these functions as being smooth and even functions on $\R$.

As a first step towards proving Theorem \ref{T:31} 
we   approximate $\bA$ locally by certain operators  which  are simpler to analyze. 
\begin{lemma}\label{L31} Let $\mu>0$   be given. 
Then, there exists $\e>0$, a constant $K=K(\e)>0$, and a partition 
$\{\pi_1^\e,\, \pi_2^\e,\, \pi_3^\e\}$ such that the operator $\bA$ introduced in \eqref{bA} satisfies
\begin{align}\label{DE}
\|\pi_i^\e\bA[h]-  \bA_{i}[\pi_i^\e h]\|_{{\rm C}^{ \alpha}(I)}\leq \mu\|\pi_i^\e h\|_{{\rm C}^{2+\alpha}(I)}+K\|h\|_{{\rm C}^{2 }(I)}
\end{align}
for $1\leq i\leq3$ and  $h\in {\rm h}_e^{2+\alpha}(\s)$, where
\begin{align}\label{OP1}
 \bA_1&= A_1(0) \Big(\p_x^2+\frac{1}{x}\p_x\Big),\\[1ex]
  \bA_2&= A_1(\pi) \Big(\p_x^2+\frac{1}{x-\pi}\p_x\Big),\label{OP2'}\\[1ex]
  \bA_3&=A_1\p_x^2.\label{OP2}
\end{align}
\end{lemma}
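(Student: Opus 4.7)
The plan is to expand $\pi_i^\e\bA[h] - \bA_i[\pi_i^\e h]$ via the Leibniz rule and split the result into three kinds of contributions: \emph{coefficient differences} of the form $\pi_i^\e(A_1-A_1(\bar x))(\cdot)$ with $\bar x\in\{0,\pi\}$, \emph{kernel differences} like $\pi_1^\e h'(1/\tan - 1/x)$ and its mirror near $\pi$, and \emph{commutator terms} collecting derivatives of $\pi_i^\e$ against $h$ and $h'$. For $i=1$, a direct computation gives
\begin{align*}
\pi_1^\e\bA[h] - \bA_1[\pi_1^\e h]
&= \pi_1^\e\bigl(A_1-A_1(0)\bigr) h'' + \pi_1^\e\bigl(A_1-A_1(0)\bigr)\frac{h'}{\tan}\\
&\quad + A_1(0)\pi_1^\e h'\Bigl(\frac{1}{\tan} - \frac{1}{x}\Bigr) + R_1^\e[h],
\end{align*}
where $R_1^\e[h]$ collects products of $(\pi_1^\e)'$, $(\pi_1^\e)''$, and $(\pi_1^\e)'/x$ --- the last being smooth since $\pi_1^\e$ is even and smooth near $0$ --- against $h$ and $h'$. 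An analogous expansion near $\pi$ handles $i=2$; for $i=3$ the difference collapses to $\pi_3^\e A_1 h'/\tan$ plus commutator terms, with no coefficient-difference contribution since $\bA_3$ retains the full coefficient $A_1$ and $1/\tan$ is smooth on $\supp(\pi_3^\e)$.

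The commutator remainders $R_i^\e[h]$ and, for $i=3$, also $\pi_3^\e A_1 h'/\tan$ involve only $h$ or $h'$ multiplied by functions whose ${\rm C}^\alpha(I)$-norm depends only on $\e$; via the interpolation $\|h\|_{{\rm C}^{1+\alpha}(I)}\leq C\|h\|_{{\rm C}^2(I)}$ on the bounded interval $I$, these feed only into the $K\|h\|_{{\rm C}^2(I)}$ summand of~\eqref{DE}. The kernel-difference term is small because $1/\tan(x) - 1/x = -x/3 + O(x^3)$ is smooth and vanishes at $0$, giving an extra factor of order $\e^{1-\alpha}$ in ${\rm C}^\alpha$-norm on $\supp(\pi_1^\e)=[-3\e,3\e]$. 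The first-order coefficient-difference term $\pi_1^\e(A_1-A_1(0))h'/\tan$ is rewritten as $A\cdot h'/\sin$ with $A:=\pi_1^\e(A_1-A_1(0))\cos$; one checks that $A\in\E_0$ (it is even, lies in ${\rm h}^\alpha$, and vanishes at both $0$ and $\pi$, the latter because $\pi_1^\e$ is supported away from $\pi$), so Lemma~\ref{L22} produces a bound of the form $\nu\|h\|_{{\rm C}^{2+\alpha}(I)} + C(\nu,\e)\|h\|_{{\rm C}^\alpha(I)}$ with $\nu>0$ arbitrary.

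The critical term is $T_1:=\pi_1^\e(A_1-A_1(0))h''$. Writing $\pi_1^\e h''=(\pi_1^\e h)''-2(\pi_1^\e)'h'-(\pi_1^\e)''h$ and absorbing the last two pieces into the commutator remainder leaves $(A_1-A_1(0))(\pi_1^\e h)''$, whose support lies in $[-3\e,3\e]$. A product estimate, using that the multiplicand is supported in this small interval, bounds its ${\rm C}^\alpha(I)$-norm by a constant times
\[
\Bigl(\sup_{|x|\leq 3\e}|A_1(x)-A_1(0)| \;+\; \sup_{\substack{x\neq y\\ x,y\in[-3\e,3\e]}}\frac{|A_1(x)-A_1(y)|}{|x-y|^\alpha}\Bigr)\|\pi_1^\e h\|_{{\rm C}^{2+\alpha}(I)}.
\]
Because $A_1\in{\rm h}_e^\alpha(\s)$, the characterization of small Hölder spaces forces both the supremum and the local Hölder seminorm to tend to $0$ as $\e\to 0$; choosing $\e$ sufficiently small then makes the prefactor at most $\mu/2$, and the analogous argument at $x=\pi$ handles $i=2$. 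The main obstacle is precisely this last step: the natural product estimate produces a bound in terms of the local Hölder oscillation of $A_1$, which can be made small \emph{only} because $A_1$ lies in the small Hölder space --- for a generic element of ${\rm C}^\alpha(\s)$ the local Hölder seminorm on a shrinking interval need not vanish --- so the small-Hölder framework anticipated in Remark~\ref{R:21}(i) is indispensable here.
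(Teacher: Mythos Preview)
Your decomposition mirrors the paper's (its $T_1,T_{2a},T_{2b},T_{2c}$ correspond to your second-order coefficient difference, first-order coefficient difference, kernel difference, and commutator pieces), but there is a gap in your treatment of the first-order coefficient difference. Applying Lemma~\ref{L22} with $A=\pi_1^\e(A_1-A_1(0))\cos$ and the \emph{full} function $h$ yields $\nu\|h\|_{{\rm C}^{2+\alpha}(I)}+C(\nu,\e)\|h\|_{{\rm C}^\alpha(I)}$, and the term $\nu\|h\|_{{\rm C}^{2+\alpha}(I)}$ cannot be absorbed into $\mu\|\pi_1^\e h\|_{{\rm C}^{2+\alpha}(I)}+K\|h\|_{{\rm C}^2(I)}$: the ${\rm C}^{2+\alpha}$-oscillation of $h$ away from $\supp(\pi_1^\e)$ is controlled by neither summand. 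The fix is to first commute $\pi_1^\e$ inside the derivative, writing $\pi_1^\e h'=(\pi_1^\e h)'-(\pi_1^\e)'h$, and then argue with $(\pi_1^\e h)'/\tan$ in place of $h'/\tan$. This is exactly what the paper does: it estimates $T_{2a}[h]=(A_1-A_1(0))\chi_1^\e\,(\pi_1^\e h)'/\tan$ by $\|(A_1-A_1(0))\chi_1^\e\|_{{\rm C}(I)}\|(\pi_1^\e h)'/\tan\|_{{\rm C}^\alpha(I)}+K\|(\pi_1^\e h)'/\tan\|_{{\rm C}(I)}$, which has the required structure.

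A second, smaller point: your assertion that the small-H\"older hypothesis is \emph{indispensable} for the leading term $(A_1-A_1(0))(\pi_1^\e h)''$ overstates the case. The paper's product estimate splits this into $\|(A_1-A_1(0))\chi_1^\e\|_{{\rm C}(I)}\|\pi_1^\e h\|_{{\rm C}^{2+\alpha}(I)}$ plus a remainder bounded by $K(\e)\|h\|_{{\rm C}^2(I)}$; only the \emph{continuity} of $A_1$ at $0$ is needed to make the sup-norm factor small, while the H\"older-seminorm contribution (which need not vanish for a generic ${\rm C}^\alpha$ coefficient) is simply absorbed into the $\e$-dependent constant $K$. Your route via the vanishing local H\"older oscillation of $A_1\in{\rm h}^\alpha$ is also valid, but for this lemma it is not required.
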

\begin{proof}
Observing that  $\pi_3^\e/\tan\in {\rm C}^{ \infty}(I)$, it follows that
\begin{align*} 
\|\pi_3^\e\bA[h]-  \bA_{3}[\pi_3^\e h]\|_{{{\rm C}^{ \alpha}(I)}}&\leq\|A_1[(\pi_3^\e)''h+2(\pi_3^\e)'h']\|_{{\rm C}^{ \alpha}(I)}
+\| A_1 h'\pi_3/\tan \|_{{\rm C}^{ \alpha}(I)}\leq K\|h\|_{{{\rm C}^{1+\alpha}(I)}},  
\end{align*}
which proves \eqref{DE} for $i=3$.

Furthermore, it holds that
\begin{align*} 
 \pi_1^\e\bA[h]-  \bA_{1}[\pi_1^\e h]=T_1[h]+T_2[h], 
\end{align*}
where
\begin{align*} 
T_1[h]&:=A_1 \pi_1^\e h''-A_1(0)(\pi_1^\e h)'',\\[1ex]
 T_2[h]&:=A_1 \frac{1}{\tan} \pi_1^\e h'-A_1(0)\frac{1}{x}(\pi_1^\e h)'.
\end{align*}
Using $\chi_1^\e\pi_1^\e=\pi_1^\e$, we now obtain
\begin{align*} 
\|T_1[h]\|_{{{\rm C}^{ \alpha}(I)}}&\leq \|(A_1-A_1(0))\chi_1^\e\|_{{{{\rm C} (I)}}} \|\pi_1^\e h\|_{{{{\rm C}^{ 2+\alpha}(I)}}}+K\| h\|_{{{\rm C}^{2}(I)}}\leq
\frac{\mu}{2}\|\pi_1^\e h\|_{ {{{\rm C}^{ 2+\alpha}(I)}} }+\|h\|_{{{{\rm C}^{ 2}(I)}}},
\end{align*}
provided that $\e$ is sufficiently small.

Concerning the second term we write
 \begin{align*} 
 T_2[h]=T_{2a}[h]+T_{2b}[h]-T_{2c}[h],
\end{align*}
where
 \begin{align*} 
 T_{2a}[h]& = (A_1-A_1(0))\chi_1^\e    \frac{1}{\tan} (\pi_1^\e h)',\\[1ex]
 T_{2b}[h]&=A_1(0)\Big(\frac{1}{\tan}-\frac{1}{x } \Big)(\pi_1^\e h)',\\[1ex]
 T_{2c}[h]&= A_1  \frac{1}{\tan}(\pi_1^\e )'h.
\end{align*}
The arguments in the proof of Lemma \ref{L22}   yield
 \begin{align*} 
\|T_{2a}[h]\|_{{{\rm C}^{ \alpha}(I)}}&\leq \|(A_1-A_1(0))\chi_1^\e\|_{{{\rm C}(I)}} \| (\pi_1^\e h)'/\tan\|_{{{\rm C}^{ \alpha}(I)}}
+K \|  (\pi_1^\e h)'/\tan\|_{{{\rm C} (I)}}
 \\
&\leq\frac{\mu}{2}\|\pi_1^\e h\|_{{{\rm C}^{2+ \alpha}(I)}}+K\|h\|_{{{\rm C}^{ 2}(I)}}.
\end{align*}
Besides, since $(\pi_1^\e)'/\tan\in {\rm C}^{ \infty}(I)$, we get
\[
\|T_{2c}[h]\|_{{{\rm C}^{ \alpha}(I)}}\leq K\|h\|_{{{\rm C}^{ \alpha}(I)}}\leq K\|h\|_{{{\rm C}^{2}(I)}}.
\]
Finally, it is not difficult to see that the function 
\[
\phi(x):=\frac{1}{\tan}-\frac{1}{x}
\]
satisfies  $ \chi_1^\e\phi\in{\rm C}^\infty(I)$. Therewith we have
 \begin{align*} 
\|T_{2b}[h]\|_{{{\rm C}^{ \alpha}(I)}}&\leq C\|\chi_1^\e\phi \|_{{{\rm C}^{ \alpha}(I)}}\| (\pi_1^\e h)'\|_{{{\rm C}^{ \alpha}(I)}} \leq K\|h\|_{{{\rm C}^{ 1+\alpha}(I)}},
\end{align*}
and we conclude that
 \begin{align*} 
\|T_{2}[h]\|_{{{\rm C}^{ \alpha}(I)}}\leq\frac{\mu}{2}\|\pi_1^\e h\|_{{{\rm C}^{2+ \alpha}(I)}}+K\|h\|_{{{\rm C}^{2}(I)}},
\end{align*}
provided that $\e$ is sufficiently small.
This proves \eqref{DE} for $i=1$. The proof of the claim for $i=2$ is similar and we therefore omit it.
\end{proof}

We now consider the operators $\bA_i$, $1\leq i\leq 3$, found in Lemma \ref{L31} in suitable functional analytic settings.
Regarding    $\bA_3$    as an element of  $\kL({\rm h}_e^{2+\alpha}(\s),{\rm h}_e^{\alpha}(\s))$, 
it is well-known that $\bA_3$ generates an analytic semigroup in $\kL({\rm h}_e^{\alpha}(\s))$. 
In particular, there exist  constants $\kappa_3\geq 1$ and $\omega_3>0$ such that 
\begin{align}\label{EDI1}
\kappa_3\|(\lambda-\bA_3)[h]\|_{0}\geq |\lambda|\cdot \|h\|_0+\|h\|_{1}, \qquad  h\in {\rm h}_e^{2+\alpha}(\s),\, \re\lambda\geq \omega_3,
\end{align}
cf. \cite[Theorem I.1.2.2]{Am95}.
The operator  $\bA_1$ can be viewed as an element of $\kL({\rm h}_{e}^{2+\alpha}(\R),{\rm h}_{e}^{\alpha}(\R))$ 
\footnote{For a definition of ${\rm h}^{k+\alpha}(\R^n)$, $k,\, n\in\N$, see \cite{L95}. Again, ${\rm h}_e^{k+\alpha}(\R)$, $k\in\N$, denotes 
the closed subspace of ${\rm h}^{k+\alpha}(\R)$ consisting of even functions.}.
Furthermore, in this context $\bA_1$ 
appears  as the restriction of $A_1(0)\Delta\in\kL({\rm h}^{2+\alpha}(\R^2), {\rm h}^{\alpha}(\R^2)) $ to the subset of rotationally symmetric functions.
Indeed, given $h\in {\rm h}_{e}^{k+\alpha}(\R),$  $k\in\{0,2\}$,  let
\[u(z):=(h\circ|\,\cdot\,|)(z)=h(\sqrt{x^2+y^2}),\qquad z=(x,y)\in\R^2.\]
One can show  that the radially symmetric function  $u$ belongs to $ {\rm h}^{k+\alpha}(\R^2)$ and that 
\begin{align*}
&\|h\|_{{\rm C}^{\alpha}(\R)}=\|u\|_{{\rm C}^{ \alpha}(\R^2)}, \\[1ex]
&\|h\|_{{\rm C}^{2+\alpha}(\R)}\leq \|u\|_{{\rm C}^{2+\alpha}(\R^2)}\leq C\|h\|_{{\rm C}^{2+\alpha}(\R^2)},
\end{align*}
with a constant $C\geq 1$ independent of $h$.
Recalling that $-A_1(0)\Delta\in\mathcal{H}({\rm h}^{2+\alpha}(\R^2), {\rm h}^{\alpha}(\R^2)) $, cf. \cite[Theorem 3.1.14 and Corollary 3.1.16]{L95},
there exist constants $\kappa_1\geq1$ and $\omega_1>0$ such that
\[
\kappa_1\|(\lambda-A_1(0)\Delta)[u]\|_{{\rm C}^{ \alpha}(\R^2)}\geq|\lambda|\cdot\|u\|_{{\rm C}^{\alpha}(\R^2)}+\|u\|_{{\rm C}^{2+\alpha}(\R^2)},
\qquad u\in {\rm h}^{2+\alpha}(\R^2),\, \re\lambda\geq\omega_1.
\]
  In particular it holds that 
  \[
\kappa_1\|(\lambda-A_1(0)\Delta)[h\circ|\,\cdot\,|]\|_{{\rm C}^{\alpha}(\R^2)}
\geq|\lambda|\cdot\|h\circ|\,\cdot\,|\|_{{\rm C}^{ \alpha}(\R^2)}+\|h\circ|\,\cdot\,|\|_{{\rm C}^{2+\alpha}(\R^2)}
\geq|\lambda|\cdot\|h \|_{{\rm C}^{\alpha}(\R)}+\|h \|_{{\rm C}^{2+\alpha}(\R)}
\]
for $ h\in {\rm h}_{e}^{2+\alpha}(\R) $ and $ \re\lambda\geq\omega_1.$
  Moreover, in virtue of
  \[
  \|(\lambda-A_1(0)\Delta)[h\circ|\,\cdot\,|]\|_{{\rm C}^{ \alpha}(\R^2)}=\|((\lambda-\bA_1)[h])\circ|\,\cdot\,|\|_{{\rm C}^{ \alpha}(\R^2)}
  =\|(\lambda-\bA_1)[h] \|_{{\rm C}^{ \alpha}(\R)}
  \]
we conclude that 
\begin{align}\label{EDI2}
\kappa_1\|(\lambda-\bA_1)[h]\|_{{\rm C}^{ \alpha}(\R)}\geq|\lambda|\cdot\|h \|_{{\rm C}^{ \alpha}(\R)}+\|h \|_{{\rm C}^{2+\alpha}(\R)},
\qquad h\in {\rm h}_e^{2+\alpha}(\R),\, \re\lambda\geq\omega_1.
\end{align}
The constants $\kappa_1$ and $\omega_1$ can be chosen such that \eqref{EDI2}  holds true also when replacing $\bA_1$ by $(A_1(\pi)/A_1(0))\bA_1=\tau_{-\pi}\bA_2\tau_\pi,$ where 
$ \tau_a,\, a\in\R$, denotes   the  right translation by $a$.

In particular \eqref{EQN}, \eqref{EDI1}, and \eqref{EDI2} ensure there exists $\kappa'\geq 1$ and $\omega'>0$ such that
\begin{align}\label{EDI3}
\kappa'\|(\lambda-\bA_i)[\pi_i^\e h]\|_{{\rm C}^\alpha(I)}\geq |\lambda|\cdot \|\pi_i^\e h\|_{{\rm C}^\alpha(I)}+\|\pi_i^\e h\|_{{\rm C}^{2+\alpha}(I)}
\end{align}
for all $  h\in {\rm h}_e^{2+\alpha}(\s),\, \re\lambda\geq \omega',\, 1\leq i\leq 3, $ and all $\e\in(0,\e_0]$.
The estimate \eqref{EDI3} together with the observation that the map 
\begin{align}\label{EDI5}
\Big[h\mapsto\sum_{i=1}^3\|\pi_i^\e h\|_{{\rm C}^{k+\alpha}(I)}\Big]:{\rm C}^{k+\alpha}(I)\to\R,\qquad k\in\N,\, 
\end{align} 
defines a norm on ${\rm C}^{k+\alpha}(I)$ which is equivalent to the standard H\"older norm are essential for establishing the following result.

\begin{lemma}\label{L32}
There exist $\kappa\geq  1$ and $\omega>0$ such that 
\begin{align}\label{EDI4}
\kappa\|(\lambda-\bA)[h]\|_{0}\geq |\lambda|\cdot \|h\|_{0}+\|h\|_{1}
\end{align}
for all $  h\in {\rm h}_e^{2+\alpha}(\s)$ and all $ \re\lambda\geq \omega$.
\end{lemma}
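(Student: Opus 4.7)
The plan is to establish \eqref{EDI4} by the standard localisation/frozen-coefficient procedure: on the support of each cutoff $\pi_i^\e$ the operator $\bA$ is close to the model operator $\bA_i$, which already enjoys the sectorial bound \eqref{EDI3}, while the commutator error is controlled by Lemma \ref{L31}. Summing the resulting local estimates via the norm equivalence \eqref{EDI5} and absorbing the lower order remainders will yield the desired inequality.

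Step 1 (local estimates). For fixed $h\in{\rm h}_e^{2+\alpha}(\s)$, $i\in\{1,2,3\}$, and $\re\lambda\geq\omega'$, I would start from the identity
\[
(\lambda-\bA_i)[\pi_i^\e h]=\pi_i^\e(\lambda-\bA)[h]+\bigl(\pi_i^\e\bA[h]-\bA_i[\pi_i^\e h]\bigr).
\]
Applying \eqref{EDI3} on the left, the triangle inequality on the right, and Lemma \ref{L31} to the commutator term, I obtain
\[
|\lambda|\cdot\|\pi_i^\e h\|_{{\rm C}^\alpha(I)}+\|\pi_i^\e h\|_{{\rm C}^{2+\alpha}(I)}\leq\kappa'\|\pi_i^\e(\lambda-\bA)[h]\|_{{\rm C}^\alpha(I)}+\kappa'\mu\|\pi_i^\e h\|_{{\rm C}^{2+\alpha}(I)}+\kappa'K\|h\|_{{\rm C}^2(I)}.
\]
Since the constant $\kappa'$ of \eqref{EDI3} is independent of $\e$, I would fix $\mu$ once so that $\kappa'\mu\leq 1/2$; this in turn fixes $\e$ and the partition through Lemma \ref{L31}, and the middle term on the right can be absorbed into the left-hand side.

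Step 2 (summation). Summing the three resulting inequalities over $i$, using the boundedness of multiplication by the smooth cutoffs $\pi_i^\e$ on ${\rm C}^\alpha(I)$ to get
\[
\sum_{i=1}^3\|\pi_i^\e(\lambda-\bA)[h]\|_{{\rm C}^\alpha(I)}\leq C\,\|(\lambda-\bA)[h]\|_0,
\]
and invoking the norm equivalence \eqref{EDI5} for $k=0$ and $k=2$ together with \eqref{EQN}, I would arrive at
\[
|\lambda|\cdot\|h\|_0+c_0\|h\|_1\leq C_1\|(\lambda-\bA)[h]\|_0+C_2\|h\|_{{\rm C}^2(\s)},
\]
with constants $c_0,C_1,C_2>0$ that depend only on the (now fixed) partition and are independent of $\lambda$ and $h$. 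The remaining ${\rm C}^2$-residual is then handled by the standard H\"older interpolation
\[
\|h\|_{{\rm C}^2(\s)}\leq\eta\|h\|_1+C(\eta)\|h\|_0,\qquad\eta>0,
\]
by first choosing $\eta$ with $C_2\eta\leq c_0/2$ (absorbing the $\|h\|_1$ contribution) and then enlarging $\omega\geq\omega'$ so that $C_2C(\eta)\leq|\lambda|/2$ whenever $\re\lambda\geq\omega$ (absorbing the leftover $\|h\|_0$ contribution); this yields \eqref{EDI4}.

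The main obstacle is the order of the quantifiers: $\mu$ must be fixed first, then $\eta$, and only then $\omega$, and throughout the argument one must ensure that none of the intermediate constants inherit a $\lambda$-dependence. The fact that $\kappa'$ in \eqref{EDI3} does not depend on $\e$ is what makes this bookkeeping feasible, and the positivity of $A_1$ is essential at the previous stage since only then does the principal part $\bA_1=A_1(0)(\p_x^2+x^{-1}\p_x)$ arise from the Laplacian on $\R^2$ restricted to radially symmetric functions, so that \eqref{EDI2} is available.
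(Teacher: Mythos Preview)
Your proposal is correct and follows essentially the same route as the paper: fix $\mu=(2\kappa')^{-1}$ in Lemma~\ref{L31}, combine with \eqref{EDI3} to obtain the local estimate, sum via \eqref{EDI5} and \eqref{EQN}, and then absorb the lower order remainder by interpolation and by enlarging $\omega$. The only cosmetic difference is that the paper writes the residual term as $\|h\|_{{\rm C}^{2+\alpha/2}(\s)}$ and invokes \eqref{Intprop} together with Young's inequality, whereas you keep it as $\|h\|_{{\rm C}^{2}(\s)}$ and use the equivalent interpolation inequality $\|h\|_{{\rm C}^2}\leq\eta\|h\|_1+C(\eta)\|h\|_0$ directly; the content is identical.
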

\begin{proof}
Letting $\kappa'\geq1$ and $\omega'>0$ denote the constants in \eqref{EDI3}, we  chose $\mu:=(2\kappa')^{-1}$ in Lemma~\ref{L31}.  
  Lemma~\ref{L31} together with  \eqref{EDI3} yields  
\begin{align*}
  \kappa' \|\pi_i^\e (\lambda-\bA) [h]\|_{{\rm C}^\alpha(I) }&\geq \kappa'\|(\lambda-\bA_i)[\pi^\e_i h]\|_{{\rm C}^\alpha(I)}-  \kappa' \|\pi_i^\e \bA [h]-\bA_i[\pi^\e_ih]\|_{{\rm C}^\alpha(I)}\\[1ex]
  &\geq \frac{1}{2 } \|\pi_i^\e h\|_{{\rm C}^{2+\alpha}(I)}+|\lambda|\cdot\|\pi_i^\e h\|_{{\rm C}^\alpha(I)}-\kappa'K\| h\|_{{\rm C}^2(I)}
 \end{align*}
 for $1\leq i\leq 3$, $  h\in {\rm h}_e^{2+\alpha}(\s) $, and $ \re\lambda\geq \omega'.$
 In virtue of \eqref{EDI5} and of \eqref{EQN} it now follows that there exists a constant $\kappa''\geq1$ such that 
  \begin{align*}
  \kappa'' \big(\| h\|_{{\rm C}^{2+\alpha/2}(\s)}+ \|(\lambda-\bA) [h]\|_{{\rm C}^\alpha(\s) }\big)&\geq \| h\|_{{\rm C}^{2+\alpha}(\s)}+|\lambda|\cdot\| h\|_{{\rm C}^\alpha(\s)}
 \end{align*}
  for $  h\in {\rm h}_e^{2+\alpha}(\s) $ and $ \re\lambda\geq \omega'.$
  Finally, the  interpolation property \eqref{Intprop},  the latter estimate, and
   Young's inequality ensure that there exist constants
  $\kappa\geq 1$ and $\omega>0$ such that \eqref{EDI4} is satisfied.
\end{proof}

To derive the desired generation result we are left to show that $\omega-\bA\in{\rm Isom}({\rm h}_{e}^{2+\alpha}(\s),{\rm h}_{e}^{\alpha}(\s)).$
To this end we infer from \eqref{EDI4} that $\omega-\bA$ is one-to-one.
Having shown that $\bA\in\kL({\rm h}_{e}^{2+\alpha}(\s),{\rm h}_{e}^{\alpha}(\s))$ is a Fredholm operator of index zero, the isomorphism property follows then in view of the compactness   
of the embedding $ {\rm h}_{e}^{2+\alpha}(\s)\hookrightarrow{\rm h}_{e}^{\alpha}(\s).$

\begin{lemma}\label{L33}
 $\bA\in\kL({\rm h}_{e}^{2+\alpha}(\s),{\rm h}_{e}^{\alpha}(\s))$ is a Fredholm operator of index  zero.
\end{lemma}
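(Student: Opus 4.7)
The plan is to factor out the non-constant leading coefficient of $\bA$ and compute the Fredholm data of the resulting reference operator directly.

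\textbf{Reduction.} The numerator $2h_0/\sin^2$ and denominator $2d_0^2 h_0 + (h_0'/\sin)^2$ of $A_1$ both lie in ${\rm h}_e^\alpha(\s)$ by Lemma~\ref{L:1}, and since $h_0 \in \cO$, the denominator is bounded below by a positive constant while $A_1(0) = 1/h_0''(0) > 0$, $A_1(\pi) = 1/h_0''(\pi) > 0$, and $A_1 > 0$ on $(0,\pi)$. Hence $A_1 \in {\rm h}_e^\alpha(\s)$ is strictly positive and bounded below, and since small Hölder spaces are closed under pointwise inversion of strictly positive elements, $1/A_1 \in {\rm h}_e^\alpha(\s)$. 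Thus multiplication by $A_1$ is an isomorphism of ${\rm h}_e^\alpha(\s)$, and from the factorization
\[
\bA = M_{A_1} \circ \bA_*, \qquad \bA_*[h] := h'' + \frac{h'}{\tan},
\]
the Fredholm properties of $\bA$ and $\bA_*$ coincide. It therefore suffices to prove that $\bA_* \in \kL({\rm h}_e^{2+\alpha}(\s), {\rm h}_e^\alpha(\s))$ is Fredholm of index zero.

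\textbf{Kernel and range.} Rewriting $\bA_* h = f$ as $(\sin(x)\,h'(x))' = f(x)\sin(x)$, the equation $(\sin\cdot h')' = 0$ forces $\sin\cdot h' \equiv c$; evaluation at $x = 0$ gives $c = 0$, so $h \equiv \mathrm{const}$ and $\Kern \bA_* = \R$. Integrating $\bA_* h = f$ against $\sin$ over $[0,\pi]$ yields the compatibility $\int_0^\pi f(s)\sin(s)\,ds = 0$, a continuous linear constraint defining a closed codimension-one hyperplane $Y_0 \subset {\rm h}_e^\alpha(\s)$. Conversely, given $f \in Y_0$, the function
\[
h(x) := \int_0^x \frac{1}{\sin y}\int_0^y f(s)\sin(s)\,ds\,dy
\]
is even and $2\pi$-periodic (the vanishing of the inner integral at $x = \pi$ combined with l'H\^opital's rule yields $h'(0) = h'(\pi) = 0$, needed for smooth even extension at these points) and satisfies $\bA_* h = f$ pointwise on $(0,\pi)$. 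Hence $\dim \Kern \bA_* = \codim Y_0 = 1$, so $\bA_*$, and therefore $\bA$, is Fredholm of index zero.

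\textbf{Main obstacle.} The nontrivial point is to verify $h \in {\rm h}_e^{2+\alpha}(\s)$, specifically near the degenerate points $x = 0$ and $x = \pi$ where the formula for $h$ involves $1/\sin$. My strategy mirrors Lemma~\ref{L31}: locally near $x = 0$ the operator $\bA_*$ agrees with the radial Laplacian $\p_x^2 + x^{-1}\p_x$ up to lower-order bounded terms, and the mapping properties and Schauder estimate for this radial Laplacian between the small Hölder spaces on $\R$ were obtained via the identification with $\Delta$ on ${\rm h}^{2+\alpha}(\R^2)$ and \cite[Theorem~3.1.14 and Corollary~3.1.16]{L95}, as used just before \eqref{EDI2}. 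A symmetric local analysis applies near $x = \pi$, and the partition $\{\pi_i^\e\}$ patches the local Hölder regularity together with the interior smoothness on $(0,\pi)$ to yield $h \in {\rm h}_e^{2+\alpha}(\s)$, completing the proof.
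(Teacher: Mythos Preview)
Your proof is correct and follows essentially the same route as the paper: identify the one-dimensional kernel via $(h'\sin)'=0$, characterize the range by the codimension-one integral constraint, and construct an explicit preimage by the double integral formula. Your preliminary factorization $\bA=M_{A_1}\circ\bA_*$ is a clean simplification (the paper carries $A_1$ through, so its compatibility condition reads $\int_0^\pi (f\sin)/A_1\,dx=0$ and the inner integrand is $f\sin/A_1$), and for the regularity of the explicit $h$ you sketch a Schauder/localization argument via the radial Laplacian, whereas the paper simply asserts that ``standard (but lengthy) arguments'' show $h'/\sin\in{\rm h}_e^\alpha(\s)$---both are acceptable sketches of the same technical endpoint.
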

\begin{proof}
Since $A_1>0$, the equation $\bA[h]=0$ is equivalent to
\[
h''+\frac{h'}{\tan}=0, 
\] 
hence $ (h'\sin)'=0$. The kernel of $\bA$ consists thus only of constant functions.

It is easy to see that the range of $\bA$ is contained in 
$$Y:=\Big\{f\in{\rm h}_{e}^{\alpha}(\s)\,:\, \int_0^{\pi} \frac{f\sin}{A_1}\, dx=0  \Big\},$$
which is a closed subspace of ${\rm h}_{e}^{\alpha}(\s)$ of codimension $1$.
To show that the range of $\bA$ coincides with $Y$ we  associate to $f\in Y$ the function 
\[
h(x):=\int_0^x\frac{1}{\sin(t)}\int_0^t\frac{f\sin}{A_1}(s)\, ds\, dt, \qquad x\in[0,2\pi].
\]
Using  the property defining 
 $Y$, it is not difficult to check that $h$ is twice continuously differentiable with 
\begin{align*}
h(2\pi)&=\int_0^{2\pi}\frac{1}{\sin (t)}\int_0^t\frac{f\sin}{A_1}(s)\, ds\, dt
=-\int_0^{2\pi}\frac{1}{\sin (t)}\int_t^{\pi}\frac{f\sin}{A_1}(s)\, ds\, dt\\[1ex]
&=-\int_0^{\pi}\frac{1}{\sin (t)}\int_t^{\pi}\frac{f\sin}{A_1}(s)\, ds\, dt-\int_\pi^{2\pi}\frac{1}{\sin (t)}\int_t^{\pi}\frac{f\sin}{A_1}(s)\, ds\, dt=0=h(0).
\end{align*}
The second last identity above  follows by using appropriate  substitutions in the second integral.  
Moreover, it holds that  $h'(0)=h'(2\pi)=0$, $h''(0)=h''(2\pi)$, and  
\[
 A_1\Big(h''+\frac{h'}{\tan} \Big) =f\qquad\text{in $\R$},
\]
as we may extend $h$ by periodicity to $\R$. 
Some standard (but lengthy) arguments show that $ h'/\sin$ lies in $ {\rm h}_{e}^{\alpha}(\s)$, which implies that 
$h\in{\rm h}_{e}^{2+\alpha}(\s)$.
Thus, $f$ belongs to the range of $\bA$ and the claim follows.
\end{proof}

\begin{proof}[Proof of Theorem \ref{T:31}]
In view of Lemma \ref{L32}  it remains to show that $\omega-\bA:{\rm h}_{e}^{2+\alpha}(\s)\to {\rm h}_{e}^{\alpha}(\s)$ is an isomorphism.
This property is an immediate consequence of the estimate \eqref{EDI4}, which implies in particular  that $\omega-\bA$ is injective, and of the fact that $\omega-\bA$ is a Fredholm operator of index zero, cf. Lemma \ref{L33} (we recall at this point  that the embedding $ {\rm h}_{e}^{2+\alpha}(\s)\hookrightarrow {\rm h}_{e}^{\alpha}(\s)$ is compact). 
\end{proof}

 We conclude this section with the proof of the well-posedness result stated in Theorem \ref{MT1}.
 The proof of the parabolic smoothing property for the function $v$ is postponed to Section~\ref{Sec:4}.
 \begin{proof}[Proof of Theorem \ref{MT1}]
We first address the solvability of \eqref{TP}.
As a direct consequence of  Theorem~\ref{T:31} we have that 
  \[
 -\p\Phi(h_0,c_0,d_0)\in \kH(\E_1\times\R^2,\E_0\times \R^2)
 \] 
 for all $(h_0,c_0,d_0)\in\cO\times\R\times (0,\infty)$. 
 Recalling also  \eqref{REG} and the interpolation property of the small H\"older spaces \eqref{Intprop},
   the assumptions of \cite[Theorem 8.4.1]{L95} are all satisfied in the context of \eqref{TP}.
  Hence, for each  $(h_0,c_0,d_0)\in\cO\times \R\times (0,\infty)$,  \eqref{TP} possesses a unique maximal strict  solution $$(h,c,d):=(h,c,d)(\,\cdot\,;(h_0,c_0,d_0))$$
such that 
    \begin{align*}
&h\in  {\rm C}^1([0,t^+),\E_0)\cap {\rm C}([0,t^+),\cO),\\[1ex]
&c\in {\rm C}^1([0,t^+),\R),\\[1ex]
&d\in {\rm C}^1([0,t^+),(0,\infty)),
 \end{align*}
 where $t^+:=t^+(h_0,c_0,d_0)\in(0,\infty]$.
Since by assumption  $h_0:=v_0(c_0-d_0\cos) \in\cO$,  the existence and uniqueness claim in Theorem \ref{MT1} follows.
That  $a,\,b\in {\rm C}^\omega((0,t^+))$ is a straight forward consequence of \cite[Corollary 8.4.6]{L95}.
The real-analyticity property for $v$ (or $h$) is however more subtle and is established  in Section \ref{Sec:4} below.
 \end{proof}

\section{Parabolic smoothing}\label{Sec:4}
In the following we   consider a solution $(v,a,b)$ to \eqref{Pv} with maximal existence time $t^+$ as found in Theorem~\ref{MT1}. 
and we  prove that the associated function
  $$[(t,x)\mapsto h(t,x)]:(0,t^+)\times(0,\pi)\to\R $$  is  real-analytic. 
In this way we establish the parabolic smoothing property for the function $v$ as stated in Theorem~\ref{MT1}.
The proof below exploits a  parameter trick which has been used in other variants also in \cite{An90, AC11, ES96, PSS15, M16x} 
to improve the regularity of solutions to parabolic or elliptic equations. 
The degenerate parabolic setting considered herein raises new difficulties, in particular due to the fact that the solutions $h$ vanish at $0$ and $\pi$,
which hinder us to establish real-analyticity of $h$ in a neighborhood of these points.     

To start, we fix an arbitrary  constant $T$ such that   $0< T <t^+$. 
 Given    $\lambda\in\R$  with 
\begin{align}\label{SAL}
T|\lambda|<\min_{[0,T]} \frac{d(t)}{2}=:\vartheta_0
\end{align}  
and $t\in[0,T]$,
we introduce the function $\phi_\lambda(t):\R\to(a(t),b(t))$ with
\[
\phi_\lambda(t,x):=c(t)-d(t)\cos(x)+t\lambda\sin^2(x), \qquad t\in[0,T],\, x\in(0,\pi).
\]
The smallness condition \eqref{SAL} ensures that  $\phi_\lambda(t):(0,\pi)\to(a(t),b(t))$ is a real-analytic diffeomorphism.
We associate to $v$ the function $h(t,x,\lambda):=v(t,\phi_\lambda(t,x))$, $x\in\R$, $t\in[0,T]$, $|\lambda|<T^{-1}\vartheta_0 .$
Let further $ h(\lambda):=h(\,\cdot\,,\,\cdot\,,\lambda)$.
Since $h(t,x,0)=h(t,x)$ for $t\in[0,T]$ and $x\in\R$,  Theorem \ref{MT1} yields  
\[
h(0)\in  {\rm C}^1([0,T],{\rm h}^{\alpha}_e(\s))\cap {\rm C}([0,T],{\rm h}^{2+\alpha}_e(\s)).
\]
Clearly,   $h(\lambda)$  is $2\pi$-periodic and even with respect to $x$.
Observing that 
\[
h(t,x,\lambda)=h\Big(t,\arccos\Big(\cos x-\frac{t\lambda\sin^2(x)}{d(t)}\Big),0\Big),\qquad t\in[0,T],\,x\in\R,
\]
  tedious computations show that  
\begin{align}\label{Conv}
h(\lambda) \in  {\rm C}^1([0,T],{\rm h}^{\alpha}_e(\s))\cap {\rm C}([0,T],{\rm h}^{2+\alpha}_e(\s))\qquad\text{for all $|\lambda|<T^{-1}\vartheta_0$.}
\end{align}
We emphasize that Lemma \ref{L:A1} (ii) plays a key role in the proof of \eqref{Conv}.
 Moreover, given $t\in[0,T]$,  it holds that
\begin{align*}
\p_x^2 h(t,0,\lambda)> h(t,0,\lambda)=0=h(t,\pi,\lambda)<\p_x^2 h(t,\pi,\lambda)
\end{align*}
together with 
\[ h(t,x,\lambda)>0, \quad    x\in(0,\pi).\]
Furthermore,  the pair $(h(\lambda),c,d)$ solves the parameter dependent evolution problem
 \begin{align}\label{FNP'}
(\dot h,\dot c,\dot d)=\Psi(t,h, c,d,\lambda),\, \, t\in[0,T],\qquad (h(0),c(0), d(0))=(h_0,c_0,d_0),
\end{align}
where $\Psi:=(\Psi_1,\Psi_2,\Psi_3):[0,T]\times\cO\times \R\times (0,\infty)\times(-T^{-1}\vartheta_0,T^{-1}\vartheta_0)\to \E_0\times\R^2$ is defined by
 \begin{align*} 
&\hspace{-0.25cm}\Psi_1(t,h,c,d,\lambda)\\[1ex]
&:=    \frac{2(d+t\lambda \cos)^2}{(d+t\lambda\cos)^2-t^2\lambda^2}\cdot\frac{1}{2 (d+2t\lambda\cos)^2h+h'^2/\sin^2}\cdot \frac{h}{\sin^2}\\[1ex]
&\hspace{0.55cm} \times \Big[\frac{(d+t\lambda\cos)^2-t^2\lambda^2}{(d+2t\lambda\cos)^2}h''-\frac{d }{d+2t\lambda\cos}\frac{h'}{\tan}  
-\frac{t\lambda(2t^2\lambda^2-2d^2-3dt\lambda\cos+4t^2\lambda^2\cos^2)}{(d+2t\lambda\cos)^3}h'\sin\Big]\\[1ex]
&\hspace{0.55cm} -\frac{ h'^2/\sin^2}{2(d+2t\lambda\cos)^2h+ h'^2/\sin^2}-1\\[1ex]
&\hspace{0.55cm} +\Big[\frac{(d+2t\lambda)(d+d\cos-t\lambda\sin^2)}{d(d+2t\lambda\cos)h''(0)}-\frac{(d-2t\lambda)(d-d\cos+t\lambda\sin^2)}{d(d+2t\lambda\cos)h''(\pi)}\Big] \frac{h'}{\sin}\\[1ex]
&\hspace{0.55cm} +\frac{\lambda}{d+2t\lambda\cos}\Big[1+\frac{t}{d}\Big(\frac{d+2t\lambda}{h''(0)}+\frac{d-2t\lambda}{h''(\pi)}\Big)\Big]h'\sin 
\end{align*}
and
\begin{align*} 
 \Psi_2(t,h,c,d,\lambda)&:= \cfrac{d+2t\lambda }{h''(0)}-\cfrac{d-2t\lambda}{h''(\pi)},\\[1ex]
 \Psi_3(t,h,c,d,\lambda)&:=- \cfrac{d+2t\lambda}{h''(0)}-\cfrac{d-2t\lambda}{h''(\pi)}.
\end{align*}
 Recalling  \eqref{SAL}, it then follows that 
 \[
\Psi\in {\rm C}^\omega([0,T]\times\cO\times\R\times(0,\infty)\times (-T^{-1}\vartheta_0,T^{-1}\vartheta_0),\E_0\times\R^2). 
 \]
Observing that $[h\mapsto h'\sin]:\E_1\to\E_0$ is a bounded operator which can be estimated in a similar way  as the operators  in 
Lemma \ref{L22}, we may repeat the arguments in Sections \ref{Sec:2}-\ref{Sec:3} to conclude that 
 \[
-\p_{(h,c,d)}\Psi(t, h_0,c_0,d_0,\lambda)\in\mathcal{H}(\E_1\times\R^2,\E_0\times\R^2)
 \] 
  for all $(t, h_0,c_0,d_0,\lambda)\in [0,T]\times\cO\times\R\times(0,\infty)\times (-T^{-1}\vartheta_0,T^{-1}\vartheta_0).$
  Applying \cite[Theorem 8.4.1]{L95}, it follows that \eqref{FNP'} possesses for each $(h_0,c_0,d_0,\lambda)\in\cO\times\R\times(0,\infty)\times (-T^{-1}\vartheta_0,T^{-1}\vartheta_0)$ a unique maximal strict solution $(h,c,d)=(h,c,d)(\cdot; (h_0,c_0,d_0,\lambda))$ with
  \[
(h,c,d)\in {\rm C}^1([0,t^+),\E_0\times\R^2)
\cap C([0,t^+),\cO\times\R\times(0,\infty)),
  \]
  where  $t^+=t^+(h_0,c_0,d_0,\lambda)\in(0,T]$ is the maximal existence time.
 In view of \cite[Corollary 8.4.6]{L95} we may conclude that the  mapping
 \begin{align*}
 [(t,h_0,c_0,d_0,\lambda)\mapsto h(t;(h_0,c_0,d_0,\lambda))]:\Omega\to\E_1,
 \end{align*}
where
\[
\0:=\{(t,h_0,c_0,d_0,\lambda)\,:\,(h_0,c_0,d_0,\lambda)\in\cO\times\R\times(0,\infty)\times (-T^{-1}\vartheta_0,T^{-1}\vartheta_0),\, t\in(0,t^+)\}
\]
is real-analytic.
Let now $x_0\in(0,\pi)$ be fixed. Since $[a\mapsto a(x_0)]:\E_1\to\R$ is a real-analytic map, we obtain for the function $h$ determined by the solution $(v,a,b) $ considered above,  in particular that
\begin{align}\label{MP1}
\Big[(t,\lambda)\mapsto h\Big(t,\arccos\Big(\cos x_0-\frac{t\lambda\sin^2(x_0)}{d(t)}\Big)\Big)\Big]:(0,T)\times (-T^{-1}\vartheta_0,T^{-1}\vartheta_0)\to\R
\end{align}
 is real-analytic too.
Additionally, given $\tau\in(0,T)$, for sufficiently small $\delta>0$ it holds that
\begin{align}\label{MP2}
\Big[(t,x)\to\Big(t,\frac{(\cos(x_0)-\cos(x))d(t)}{t\sin^2(x_0)}\Big)\Big]:(\tau,T)\times (x_0-\delta,x_0+\delta)\to(0,T)\times (-T^{-1}\vartheta_0,T^{-1}\vartheta_0)
\end{align}
is well-defined and real-analytic. Here we use the real-analyticity of  $d$ in $(0,T)$ which we  already established. 
 Composing  the mappings \eqref{MP1} and \eqref{MP2}, it follows  in view of the fact that $x_0\in(0,\pi)$ is arbitrary that 
 \begin{align*}
[(t,x)\to h(t,x)]:(0,T)\times (0,\pi)\to\R
\end{align*}
is real-analytic. Recalling that $h(t,x)=v(t, c(t)-d(t)\cos(x)),$ the property
 $$v\in {\rm C}^\omega(\{(t,x)\,:\, 0<t<t^+,\, a(t)<x<b(t)\},(0,\infty))$$ follows at once.   

\appendix
\section{}\label{S:A}
The next result shows that the two approaches used in the Introduction to derive evolution equations for the functions $a$ and $b$ require the same assumptions. 
In particular, it shows that the solutions to the problem \eqref{Pv} describe closed ${\rm C}^2$-surfaces without boundary and with positive curvature at the points on the rotation axis.

\begin{lemma}\label{L:A-1}
Let $0<a$ and let $u\in {\rm C}([0,a])\cap  {\rm C}^2([0,a))$ satisfy $u(x)>0$ for all $x\in[0,a)$ and $u(a)=0$. Then, the following are equivalent:
\begin{itemize}
\item[(i)] $\underset{x\to a}\lim u'(x)=-\infty,$ $\displaystyle\underset{x\to a}\lim (uu')(x)<0,$ and 
 $$\exists\,\underset{x\to a}\lim \frac{u''}{u'^3}(x)\in \R\cup\{\pm\infty\}.$$
\item[(ii)] There exists $\e>0$ such that $u:[a-\e,a]\to [0,u(a-\e)]$ is invertible and the inverse $w:[0,u(a-\e)]\to [a-\e,a]$ satisfies $w\in {\rm C}^2([0,u(a-\e)])$,
 $w'(0)=0$, and $w''(0)<0$.\\[-1ex]
\item[(iii)] The function $v:=u^2/2$ satisfies $v\in {\rm C}^1([0,a])$, $v'(a)<0$, and $\underset{x\to a}\lim (vv'')(x)=0.$
 \end{itemize}
\end{lemma}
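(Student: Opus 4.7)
The plan is to prove the three conditions equivalent cyclically, (ii) $\Rightarrow$ (iii) $\Rightarrow$ (i) $\Rightarrow$ (ii), since (ii) is the strongest structural statement—a ${\rm C}^2$ inverse near $a$—and the other two conditions can be extracted from it by chain-rule computations, while the implication (i) $\Rightarrow$ (ii) is where the real work lies.

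For (ii) $\Rightarrow$ (iii), I would start from the identity $v(w(y))=y^2/2$ on $[0,u(a-\e)]$. Differentiating once yields $v'(w(y))=y/w'(y)$, and since $w'(0)=0$ with $w''(0)<0$, L'Hospital's rule gives $v'(w(y))\to 1/w''(0)<0$ as $y\to 0^+$. Hence $uu'=v'$ extends continuously to $a$ with $v'(a)=1/w''(0)<0$, which gives both $v\in {\rm C}^1([0,a])$ and $v'(a)<0$. Differentiating once more, $v''(w(y))=(w'(y)-yw''(y))/w'(y)^3$, so $(vv'')(w(y))=(y^2/2)(w'(y)-yw''(y))/w'(y)^3$. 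Writing $w'(y)-yw''(y)=\int_0^y(w''(s)-w''(y))\,ds=o(y)$ by uniform continuity of $w''$ near $0$, and using $w'(y)\sim w''(0)y$, this expression tends to $0$.

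For (iii) $\Rightarrow$ (i), from $uu'\to L:=v'(a)<0$ and $u\to 0^+$ one immediately obtains $u'=(uu')/u\to -\infty$. The algebraic identity $vv''=(uu')^2/2+u^3u''/2$ combined with $vv''\to 0$ yields $u^3u''\to -L^2$, and therefore $u''/u'^3=(u^3u'')/(uu')^3\to -L^2/L^3=-1/L\in\R$, so all three conditions in (i) are satisfied.

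The main obstacle is (i) $\Rightarrow$ (ii). Since $u'\to -\infty$, we have $u'<0$ on some $[a-\e,a)$, so $u$ is strictly decreasing there and admits a continuous inverse $w:[0,u(a-\e)]\to[a-\e,a]$, which is ${\rm C}^2$ on $(0,u(a-\e)]$ with $w'(y)=1/u'(w(y))$ and $w''(y)=-u''(w(y))/u'(w(y))^3$. Thus $w'(y)\to 0$, and $w''(y)\to -\ell$ where $\ell:=\lim_{x\to a}u''/u'^3\in\R\cup\{\pm\infty\}$. The core difficulty is to upgrade $\ell$ to a finite positive number; this is precisely where the two hypotheses of (i) must interact. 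I would apply L'Hospital to $(1/u'^2)/(a-x)$: on the one hand $(1/u'^2)/(a-x)=u^2/((uu')^2(a-x))$, and a first L'Hospital on $u^2/(a-x)$ gives $-2L$ via $(u^2)'=2uu'\to 2L$, so $(1/u'^2)/(a-x)\to -2L/L^2=-2/L$; on the other hand, the derivative quotient of $1/u'^2$ and $a-x$ equals $2u''/u'^3$, whose limit $2\ell$ exists in $\R\cup\{\pm\infty\}$ by assumption, so L'Hospital forces $\ell=-1/L>0$, finite. Consequently $w''(y)\to 1/L$ as $y\to 0^+$, and a final L'Hospital applied to $w'(y)/y$ (both sides vanishing at $0$) shows $w'$ is differentiable at $0$ with $w''(0)=1/L<0$; together with continuity of $w''$ on $(0,u(a-\e)]$ this yields $w\in {\rm C}^2([0,u(a-\e)])$, completing the cycle.
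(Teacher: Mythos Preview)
Your proof is correct and follows the same cyclic scheme $(i)\Rightarrow(ii)\Rightarrow(iii)\Rightarrow(i)$ as the paper, with essentially identical computations for $(ii)\Rightarrow(iii)$ and $(iii)\Rightarrow(i)$. The only notable difference is in $(i)\Rightarrow(ii)$: the paper observes directly that $w'(y)/y=1/\big((uu')(w(y))\big)\to 1/L$, which identifies $w''(0)=1/L$, and then uses the mean value theorem to extract a sequence $y_n\to 0$ with $w''(y_n)\to w''(0)$, forcing the assumed limit $\ell=\lim u''/u'^3$ to equal $-1/L$; you instead pin down $\ell$ first via your nested L'Hospital argument on $(1/u'^2)/(a-x)$ and only afterwards recover $w''(0)$. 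Both routes are valid; the paper's substitution $w'(y)/y=1/(uu')(w(y))$ is a little shorter, while your L'Hospital computation makes the value of $\ell$ explicit without invoking the mean value theorem.
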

\begin{proof} We first prove the implication (i)$\implies$(ii).
It is obvious that if $\e>0$ is sufficiently small, then $u:[a-\e,a]\to [0,u(a-\e)]$ has an inverse function $w:[0,u(a-\e)]\to [a-\e,a]$ that satisfies
 $w\in {\rm C}([0,u(a-\e)])\cap {\rm C}^2((0,u(a-\e)]).$
Furthermore, it holds that $w'(0)=0$ and 
\begin{align}\label{esti1}
\lim_{y\to 0}\frac{w'(y)}{y}=\frac{1}{\underset{x\to a}\lim (uu')(x)}.
\end{align}
Hence, $w$ is twice differentiable in $0$ and $w''(0)<0$.
 Furthermore, the mean value theorem yields the existence of a sequence $y_n\to 0+$ such that $w''(y_n)\to w''(0)$. 
Since
\[
\lim_{y\to 0}w''(y)=\lim_{x\to a}w''(u(x))=-\lim_{x\to a}\frac{u''}{u'^3}(x),
\]
we obtain the following relation
\[
\lim_{x\to a}\frac{u''}{u'^3}(x)=-w''(0),
\]
and therewith we get that  $w\in  {\rm C}^2([0,u(a-\e)]).$

We now prove the implication (ii)$\implies$(iii). We may assume that  $w'(y)<0$ for $y>0$.  
Invoking \eqref{esti1} we get that    $v\in {\rm C}^1([0,a])$ and $v'(a)=1/w''(0)<0$.
Moreover it holds that
\begin{align*} 
2\underset{x\to a}\lim (vv'')(x)=\underset{x\to a}\lim (u^3u''+u^2u'^2)(x)=\underset{y\to 0}\lim \Big(-\frac{y^3w''(y)}{w'^3(y)}+\frac{y^2}{w'^2(y)}\Big)=0,
\end{align*} 
and this proves (iii).

We conclude with the proof of (iii)$\implies$(i). 
The relations   $\underset{x\to a}\lim u'(x)=\infty $ and $\displaystyle\underset{x\to a}\lim (uu')(x)<0 $ are immediate and together with 
\begin{align*}
\underset{x\to a}\lim\frac{u''}{u'^3}(x)=\underset{x\to a}\lim\Big(\frac{2vv''}{v'^3}(x)-\frac{1}{v'(x)}\Big)=-\frac{1}{v'(a)}
\end{align*}
we have completed the proof.
\end{proof}

Lemma \ref{L:A1} provides a continuity result which is used  to establish \eqref{Conv}.
 This lemma also exemplifies why  the small H\"older spaces are to be preferred in certain applications to the classical ones. 
 \begin{lemma}\label{L:A1} Let $\alpha\in(0,1)$.
 \begin{itemize}
 \item[(i)] Given $a\in{\rm C}^\alpha(\s), $ the mapping $$[b\mapsto a\circ b]: W^1_\infty(\s)\to {\rm C}^\alpha(\s)$$ is in general not continuous.\\[-1ex]
 \item[(ii)] Given $a\in{\rm h}^\alpha(\s), $ the mapping $$[b\mapsto a\circ b]: W^1_\infty(\s)\to {\rm h}^\alpha(\s)$$ is continuous.
 \end{itemize}
 \end{lemma}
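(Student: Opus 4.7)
The plan for (ii) rests on the density of ${\rm C}^\infty(\s)$ in ${\rm h}^\alpha(\s)$. Given $a \in {\rm h}^\alpha(\s)$, for each $\mu>0$ we pick $\tilde a \in {\rm C}^\infty(\s)$ with $\|a-\tilde a\|_{{\rm C}^\alpha(\s)}<\mu$. For a sequence $b_n\to b$ in $W^1_\infty(\s)$ I split
\[
a\circ b_n - a\circ b = \big[(a-\tilde a)\circ b_n - (a-\tilde a)\circ b\big] + \big(\tilde a\circ b_n - \tilde a\circ b\big)
\]
and treat the two pieces separately. The bracketed remainder is controlled uniformly in $n$ by the elementary composition estimate
\[
\|f\circ c\|_{{\rm C}^\alpha(\s)} \leq \|f\|_{{\rm C}^0(\s)} + [f]_{{\rm C}^\alpha(\s)}\|c'\|_\infty^\alpha,
\]
combined with the uniform bound $M:=\sup_n\|b_n'\|_\infty<\infty$ coming from $W^1_\infty$-convergence; this produces an $n$-independent bound of size $O(\mu)$ on that term.

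For the second piece, with $\tilde a$ fixed and smooth, write $g_n := \tilde a\circ b_n - \tilde a\circ b = (b_n-b)H_n$ with
\[
H_n(x) := \int_0^1 \tilde a'\bigl((1-s)b(x)+sb_n(x)\bigr)\,ds.
\]
Since $\tilde a \in {\rm C}^2$ and $\{b_n\}$ is bounded in $W^1_\infty(\s)$, the family $\{H_n\}$ is bounded in $W^1_\infty(\s)$. It follows that $\|g_n\|_\infty\to 0$, and differentiating $g_n$ yields $\|g_n'\|_\infty\to 0$ as well. The interpolation inequality
\[
[g]_{{\rm C}^\alpha(\s)} \leq 2\|g\|_\infty^{1-\alpha}\|g'\|_\infty^\alpha \qquad (g\in W^1_\infty(\s)),
\]
proved by splitting into the cases $|x-y|\leq\delta$ and $|x-y|>\delta$ and optimizing over $\delta$, then forces $\|g_n\|_{{\rm C}^\alpha(\s)}\to 0$. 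A standard three-$\e$ argument (first choose $\mu$ small, then take $n$ large) delivers $a\circ b_n\to a\circ b$ in ${\rm C}^\alpha(\s)$. Since each $\tilde a\circ b$ is Lipschitz and hence lies in ${\rm h}^\alpha(\s)$, and ${\rm h}^\alpha(\s)$ is closed in ${\rm C}^\alpha(\s)$, the limit $a\circ b$ also belongs to ${\rm h}^\alpha(\s)$, completing (ii).

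For part (i) it suffices to produce a counterexample. Let $\phi \in {\rm C}^\infty(\R)$ be an even smooth cutoff with $\phi\equiv 1$ on $[-\pi/4,\pi/4]$ and support in $(-\pi/2,\pi/2)$; define $a(x):=\phi(x)|x|^\alpha$ on $[-\pi,\pi]$ and extend $2\pi$-periodically, so that $a\in{\rm C}^\alpha(\s)\setminus{\rm h}^\alpha(\s)$. Set $b(x):=\sin x$ and $b_\e(x):=\sin x+\e$; then $b_\e\to b$ in $W^1_\infty(\s)$ as $\e\to 0$. Evaluating $g_\e:=a\circ b_\e-a\circ b$ at $x_1=0$ and $x_2=-\arcsin\e$ gives $g_\e(x_1)=\e^\alpha$ and $g_\e(x_2)=-\e^\alpha$ for small $\e>0$, while $|x_1-x_2|=\arcsin\e\sim\e$, so $[g_\e]_{{\rm C}^\alpha(\s)}\geq 2-o(1)$ and ${\rm C}^\alpha$-convergence fails. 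The main obstacle in (ii) is precisely the Hölder estimate on $g_n$: one must use that \emph{both} $\|g_n\|_\infty$ and $\|g_n'\|_\infty$ tend to zero, after which the interpolation inequality does the work. The counterexample in (i) illustrates why the ${\rm h}^\alpha$-hypothesis cannot be relaxed, since for $a\in{\rm C}^\alpha\setminus{\rm h}^\alpha$ the smooth-approximation step is unavailable.
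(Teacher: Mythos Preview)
Your proof is correct and follows essentially the same approach as the paper. For (i) you use the same $|x|^\alpha$ singularity and small translations; the paper takes $b(x)=x\phi(x)$ and $b_n(x)=(x+1/n)\phi(x)$ while you take $b=\sin$ and $b_\e=\sin+\e$, which is arguably cleaner since periodicity is automatic. For (ii) both arguments run the same three-$\e$ scheme via a smooth approximant $\tilde a$ of $a$; the paper bounds $\|a_m\circ b_n-a_m\circ b\|_{{\rm C}^\alpha}$ directly in terms of $\|a_m'\|_{{\rm C}^\alpha}$ and $\|b_n-b\|_{{\rm C}^\alpha}$, while you pass through the integral representation $g_n=(b_n-b)H_n$, show $\|g_n\|_\infty,\|g_n'\|_\infty\to 0$, and invoke the interpolation inequality---an equivalent route.
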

 \begin{proof}
 It is easy to verify that  $[b\mapsto a\circ b]: W^1_\infty(\s)\to {\rm C}^\alpha(\s)$ is well-defined.
 The following example  shows that this (nonlinear) mapping is in general not continuous. 
 Indeed, let $\phi\in  {\rm C}^\infty_0(\R) $ be a function which satisfies $\phi=1$ on $[-1,1]$ and $\phi=0$ in $\R\setminus[-2,2]$.
 The $2\pi$-periodic extension   $a$ of
 \[[x\mapsto |x|^\alpha\phi(x)]:[-\pi,\pi]\to\R\]
 satisfies  $a\in{\rm C}^\alpha(\s).$ Given $1\leq n\in\N$, let $b_n$ and $b$ denote the $2\pi$-periodic extensions of
  \[\text{$[x\mapsto (x+1/n)\phi(x)]:[-\pi,\pi]\to\R$ \qquad and\qquad $[x\mapsto x\phi(x) ]:[-\pi,\pi]\to\R$}.\]
  It then holds $b,\, b_n\in W^1_\infty(\s)$ and $b_n\to b$ in $W^1_\infty(\s)$.
  Since
  \[
|a(b_n(-1/n))-a(b(-1/n))-(a(b_n(0))-a(b(0)))|  =\frac{2}{ n^\alpha}
  \]
  it follows that 
  \[
\|a\circ b_n-a\circ b\|_\alpha\geq[a\circ b_n-a\circ b]_\alpha\geq 2  \qquad\text{for all $n\in\N, \, n\geq 1$,}
  \]
  which proves (i).
  
  We now  prove (ii). 
  Let  thus 
 $b_n, b \in W^1_\infty(\s)$ with $b_n \to b$ as $n\to \infty $ in
$W^1_\infty(\s)$ and
$(a_m)\subset {\rm C}^\infty(\s)$ be a sequence with $a_m\to a$ in ${\rm C}^\alpha(\s).$
  It follows that $a_m\circ b_n\in  {\rm h}^\alpha(\s) $ for $n,\, m\in\N$. 
  Moreover, it holds
  \[
\|a\circ b_n-a\circ b\|_\alpha\leq\|a\circ b_n-a_m\circ b_n\|_\alpha +\|a_m\circ b_n-a_m\circ b\|_\alpha+\|a_m\circ b-a\circ b\|_\alpha,
  \]
  where 
  \begin{align*}
  \|a\circ b_n-a_m\circ b_n\|_\alpha&\leq (1+\|b_n'\|_0^\alpha)\|a_m-a\|_\alpha,\\[1ex]
  \|a_m\circ b-a\circ b\|_\alpha&\leq (1+\|b'\|_0^\alpha)\|a_m-a\|_\alpha,\\[1ex]
  \|a_m\circ b_n-a_m\circ b\|_\alpha&\leq\|a_m\|_\alpha\|b_n-b\|_0^\alpha+\|a_m'\|_\alpha\|b_n-b\|_\alpha (1+\|b'\|_0^\alpha+\|b_n'\|_0^\alpha).
  \end{align*}
  These estimates show that $a\circ b_n\to a\circ b$  in ${\rm C}^\alpha(\s) $ and that each ball in ${\rm C}^\alpha(\s)$ centered in $a\circ b$ contains a 
  function   $a_m\circ b_n$ with $n,\, m\in\N$ suitably large.
  Since $a_m\circ b_n \in  {\rm h}^\alpha(\s) $, it follows  that also $a\circ b\in  {\rm h}^\alpha(\s),$ and this completes
   the proof.
 \end{proof}


\bibliographystyle{abbrv}
\bibliography{GM}

\begin{thebibliography}{10}

\bibitem{AltschulerAG95}
S.~Altschuler, S.~B. Angenent, and Y.~Giga.
\newblock Mean curvature flow through singularities for surfaces of rotation.
\newblock {\em J. Geom. Anal.}, 5(3):293--358, 1995.

\bibitem{Am95}
H.~Amann.
\newblock {\em {Linear and Quasilinear Parabolic Problems. {V}ol. {I}}},
  volume~89 of {\em {Monographs in Mathematics}}.
\newblock Birkhäuser Boston, Inc., Boston, MA, 1995.
\newblock Abstract linear theory.

\bibitem{AnM12}
B.~Andrews and J.~McCoy.
\newblock Convex hypersurfaces with pinched principal curvatures and flow of
  convex hypersurfaces by high powers of curvature.
\newblock {\em Trans. Amer. Math. Soc.}, 364(7):3427--3447, 2012.

\bibitem{An88}
S.~Angenent.
\newblock Local existence and regularity for a class of degenerate parabolic
  equations.
\newblock {\em Math. Ann.}, 280(3):465--482, 1988.

\bibitem{An90}
S.~B. Angenent.
\newblock {Nonlinear analytic semiflows}.
\newblock {\em Proc. Roy. Soc. Edinburgh Sect. A}, 115(1-2):91--107, 1990.

\bibitem{An92}
S.~B. Angenent.
\newblock Shrinking doughnuts.
\newblock In {\em Nonlinear diffusion equations and their equilibrium states, 3
  ({G}regynog, 1989)}, volume~7 of {\em Progr. Nonlinear Differential Equations
  Appl.}, pages 21--38. Birkh\"{a}user Boston, Boston, MA, 1992.

\bibitem{BGN19}
J.~W. Barrett, H.~Garcke, and R.~N\"{u}rnberg.
\newblock Variational discretization of axisymmetric curvature flows.
\newblock {\em Numer. Math.}, 141(3):791--837, 2019.

\bibitem{Br78}
K.~A. Brakke.
\newblock {\em The motion of a surface by its mean curvature}, volume~20 of
  {\em Mathematical Notes}.
\newblock Princeton University Press, Princeton, N.J., 1978.

\bibitem{CDR03}
U.~Clarenz, G.~Dziuk, and M.~Rumpf.
\newblock On generalized mean curvature flow in surface processing.
\newblock In {\em Geometric analysis and nonlinear partial differential
  equations}, pages 217--248. Springer, Berlin, 2003.

\bibitem{AC11}
A.~Constantin and J.~Escher.
\newblock {Analyticity of periodic traveling free surface water waves with
  vorticity}.
\newblock {\em Ann. of Math.}, 173:559--568, 2011.

\bibitem{DG79}
G.~Da~Prato and P.~Grisvard.
\newblock Equations d'\'evolution abstraites non lin\'eaires de type
  parabolique.
\newblock {\em Ann. Mat. Pura Appl. (4)}, 120:329 --396, 1979.

\bibitem{DGK14}
D.~Depner, H.~Garcke, and Y.~Kohsaka.
\newblock Mean curvature flow with triple junctions in higher space dimensions.
\newblock {\em Arch. Ration. Mech. Anal.}, 211(1):301--334, 2014.

\bibitem{DK91}
G.~Dziuk and B.~Kawohl.
\newblock On rotationally symmetric mean curvature flow.
\newblock {\em J. Differential Equations}, 93(1):142--149, 1991.

\bibitem{Ec04}
K.~Ecker.
\newblock {\em Regularity theory for mean curvature flow}, volume~57 of {\em
  Progress in Nonlinear Differential Equations and their Applications}.
\newblock Birkh\"{a}user Boston, Inc., Boston, MA, 2004.

\bibitem{EM10}
J.~Escher and B.-V. Matioc.
\newblock {Neck pinching for periodic mean curvature flows}.
\newblock {\em Analysis (Munich)}, 30(3):253--260, 2010.

\bibitem{ES96}
J.~Escher and G.~Simonett.
\newblock {Analyticity of the interface in a free boundary problem}.
\newblock {\em Math. Ann.}, 305(3):439--459, 1996.

\bibitem{ESim98}
J.~Escher and G.~Simonett.
\newblock The volume preserving mean curvature flow near spheres.
\newblock {\em Proc. Amer. Math. Soc.}, 126(9):2789--2796, 1998.

\bibitem{Gi06}
Y.~Giga.
\newblock {\em Surface evolution equations}, volume~99 of {\em Monographs in
  Mathematics}.
\newblock Birkh\"{a}user Verlag, Basel, 2006.
\newblock A level set approach.

\bibitem{GR89}
M.~A. Grayson.
\newblock A short note on the evolution of a surface by its mean curvature.
\newblock {\em Duke Math. J.}, 58(3):555--558, 1989.

\bibitem{Hu84}
G.~Huisken.
\newblock Flow by mean curvature of convex surfaces into spheres.
\newblock {\em J. Differential Geom.}, 20(1):237--266, 1984.

\bibitem{Hu90}
G.~Huisken.
\newblock Asymptotic behavior for singularities of the mean curvature flow.
\newblock {\em J. Differential Geom.}, 31(1):285--299, 1990.

\bibitem{LeCrone14}
J.~LeCrone.
\newblock Stability and bifurcation of equilibria for the axisymmetric averaged
  mean curvature flow.
\newblock {\em Interfaces Free Bound.}, 16(1):41--64, 2014.

\bibitem{L95}
A.~Lunardi.
\newblock {\em {Analytic Semigroups and Optimal Regularity in Parabolic
  Problems}}.
\newblock {Progress in Nonlinear Differential Equations and their Applications,
  16}. Birkhäuser Verlag, Basel, 1995.

\bibitem{Ma11}
C.~Mantegazza.
\newblock {\em Lecture notes on mean curvature flow}, volume 290 of {\em
  Progress in Mathematics}.
\newblock Birkh\"{a}user/Springer Basel AG, Basel, 2011.

\bibitem{MB07}
B.-V. Matioc.
\newblock {Boundary value problems for rotationally symmetric mean curvature
  flows}.
\newblock {\em Arch. Math. (Basel)}, 89(4):365--372, 2007.

\bibitem{M16x}
B.-V. Matioc.
\newblock {The Muskat problem in two dimensions: equivalence of formulations,
  well-posedness, and regularity results}.
\newblock {\em Anal. PDE}, 12(2):281--332, 2019.

\bibitem{MayerS02}
U.~F. Mayer and G.~Simonett.
\newblock A numerical scheme for axisymmetric solutions of curvature-driven
  free boundary problems, with applications to the {W}illmore flow.
\newblock {\em Interfaces Free Bound.}, 4(1):89--109, 2002.

\bibitem{Mc15}
J.~A. McCoy, F.~Y.~Y. Mofarreh, and V.-M. Wheeler.
\newblock Fully nonlinear curvature flow of axially symmetric hypersurfaces.
\newblock {\em NoDEA Nonlinear Differential Equations Appl.}, 22(2):325--343,
  2015.

\bibitem{Mu56}
W.~Mullins.
\newblock Two-dimensional motion of idealized grain boundaries.
\newblock {\em J. Appl. Phys.}, 27:900--904, 1956.

\bibitem{PSS15}
J.~Prüss, Y.~Shao, and G.~Simonett.
\newblock {On the regularity of the interface of a thermodynamically consistent
  two-phase {S}tefan problem with surface tension}.
\newblock {\em Interfaces Free Bound.}, 17(4):555--600, 2015.

\bibitem{Va07}
J.~L. Vázquez.
\newblock {\em {The Porous Medium Equation}}.
\newblock Clarendon Press, Oxford, 2007.

\end{thebibliography}
\end{document}